\documentclass{amsart}
\usepackage[latin1]{inputenc}
\usepackage[english]{babel}
\usepackage{amstext}
\usepackage{amsfonts}
\usepackage{amssymb}
\usepackage{amsxtra}

\def\RR{\M{R}}

\def\ZZ{\M{Z}}

\newtheorem{theorem}{Theorem}[section]
\newtheorem{lemma}[theorem]{Lemma}
\newtheorem{corollary}[theorem]{Corollary}
\newtheorem{proposition}[theorem]{Proposition}
\newtheorem{example}[theorem]{Example}
\newtheorem{remark}[theorem]{Remark}

\def\bit{\begin{itemize}}
\def\eit{\end{itemize}}
\reversemarginpar   
\def\bc{\begin{center}}
\def\ec{\end{center}}
\def\bthm{\begin{theorem}}
\def\ethm{\end{theorem}}
\def\bcor{\begin{corollary}}
\def\ecor{\end{corollary}}
\def\bprop{\begin{proposition}}
\def\eprop{\end{proposition}}
\def\blem{\begin{lemma}}
\def\elem{\end{lemma}}

\def\brem{\begin{remark}}
\def\erem{\end{remark}}
\def\bdes{\begin{description}}
\def\edes{\end{description}}

\def\iti{\item[(i)]}
\def\itii{\item[(ii)]}

\def\beq{\begin{equation}}
\def\eeq{\end{equation}}

\def\ben{\begin{enumerate}}
\def\een{\end{enumerate}}

\def\beqar{\begin{eqnarray}}
\def\eeqar{\end{eqnarray}}
\def\beqarr{\begin{eqnarray*}}
\def\eeqarr{\end{eqnarray*}}


\def\RR{{\mathbb R}}  

  \def\cF{\mathcal{F}}

 \def\cN{\mathcal{N}}

\def\kL{\mathcal{L}}


\def\P{{\mathsf P}} 
\def\Q{{\mathsf Q}} 

\def\E{{\mathsf E}} 

\def\ZZ{{\mathbb Z}}       






\def\p{\varphi}

\def\part{\partial}

\def\d#1dt{\frac{d#1}{dt}}    

\title{Excited Brownian Motions}

\author{Olivier RAIMOND}
\address{Laboratoire Modal'X, Universit\'e Paris Ouest Nanterre La D\'efense, B\^atiment G, 200 avenue de la R\'epublique 92000 Nanterre, France.}
\email{olivier.raimond@u-paris10.fr}
\author{Bruno SCHAPIRA}
\address{D\'epartement de Math\'ematiques, B\^at. 425, Universit\'e Paris-Sud 11, F-91405 Orsay, cedex, France. }
\email{bruno.schapira@math.u-psud.fr}

\begin{document}

\begin{abstract}
We study a natural continuous time version of excited random walks, introduced by Norris, Rogers and Williams about twenty years ago. We obtain a
necessary and sufficient condition for recurrence and for positive
speed. Condition under which a central limit theorem holds is also given.
 These results are analogous to the ones obtained for excited (or cookie) random walks.
\end{abstract}

\keywords{Reinforced process; Excited process; Self-interacting process; Recurrence; Law of large numbers.}

\subjclass[2000]{60F15; 60F20; 60K35}

\maketitle

\section{Introduction}
Random processes that interact with their past trajectory have been
studied a lot these past years. Reinforced random walks were
introduced by Coppersmith and Diaconis, and then studied by
Pemantle, Davis and many other authors (see the recent survey
\cite{Pem}). Some examples of time and space-continuous processes
defined by a stochastic differential equation have also been
studied. For example the self-interacting (or attracting)
diffusions studied by Bena\"im, Ledoux and Raimond (see
\cite{BLR,BR2,BR3}) and also by Cranston, Le Jan, Herrmann, Kurtzmann and
Roynette (see \cite{CLJ,HR,K}), are processes defined by a stochastic
differential equation for which the drift term is a function of the
present position and of the occupation measure of the past process. Another example which is not solution of a stochastic differential equation was studied by T\'oth and Werner \cite{TW}. This process is a continuous version of some self-interacting random walks studied by T\'oth (see for instance the survey \cite{T}).

Carmona, Petit and Yor \cite{CPY}, Davis \cite{D2}, and Perman and Werner \cite{PW} (see also other references therein) studied what they called a
perturbed Brownian motion, which is the real valued process $X$
defined by
$$X_t=B_t+\alpha\max_{s\leq t} X_s + \beta \min_{s\leq t}X_s,$$
where $B$ is a Brownian motion. This process can be viewed has a weak
limit of once edge-reinforced random walks on $\ZZ$ (see in particular \cite{D1,D2,W}).

More recently, excited (or cookie) random walks were introduced by
Benjamini and Wilson \cite{BW}, and then further studied first on
$\ZZ$ \cite{BaS1,BaS2,D,KM,KZer,MPiVa,Zer1}, but also in higher dimension and on trees
(see in particular \cite{ABK, BerRa, BaS3,Ko1,Ko2,V,Zer2}). In this class of walks, the transition probabilities
depend on the number of times the walk has visited the present site.
In particular Zerner \cite{Zer1} and later Kosygina and Zerner \cite{KZer} showed that on $\ZZ$,
if $p_i$ is the probability to go from $x$ to $x+1$ after the
$i$-th visit to $x$ and if either $p_i\ge 1/2$ for all $i$ or $p_i=1/2$ for $i$ large enough, then the walk is a.s. recurrent if, and only
if, $$\delta:=\sum_i (2p_i-1)\in [-1,1],$$ and it is a.s. transient
otherwise. Next Basdevant and Singh \cite{BaS1} and Kosygina and Zerner \cite{KZer} proved that the random walk has positive speed if and only if
$$\sum_i (2p_i-1)\notin [-2,2]$$
and that it satisfies a central limit theorem (see \cite{KZer}) if
$$\sum_i (2p_i-1)\notin [-4,4].$$
Recently Dolgopyat \cite{D} proved a functional central limit theorem in the recurrent case (more precisely when $\delta<1$ and $p_i\ge 1/2$ for all $i$), identifying the limiting process as a perturbed Brownian motion (with $0\le \alpha=-\beta<1$).

We study what could be another continuous time version of cookie random walks. Excited
Brownian motions considered here are defined by the stochastic
differential equation
$$dX_t=dB_t+\p(X_t,L_t^{X_t})\ dt,$$
for some bounded and measurable $\p$, where $B$ is a Brownian motion
and $L_t^x$ is the local time in $x$ at time $t$ of $X$. These processes were introduced by Norris, Rogers and Williams \cite{NRW}, in connection with the volume excluded problem (see \cite{NRW0}). In fact they considered the case when $\p$ is only supposed to be locally bounded. Then under the assumption that $\p$ only depends on the second coordinate, that it is continuous, nonnegative and that its integral is strictly larger than $1$, they proved that the process  $X$ is well defined, transient and when $\p$ is nondecreasing they proved a law of large numbers. Along the proof they obtained also a Ray--Knight type theorem. Later Hu and Yor proved a central limit theorem \cite{HY}, assuming also that $\p$ is nonnegative, continuous and nondecreasing. Here we mainly concentrate on the case when $\p$ is bounded, but we also discuss the case when $\p$ is possibly unbounded at the end of the paper. Our main result (see Theorem \ref{theorec} and \ref{LLN} below) is the
\begin{theorem}
Assume that $\p$ is constant in the first variable, Borel-measurable and bounded. Set
$$C_1^\pm:=\int_0^\infty \exp \left[ \mp\int_0^x \frac{dl}{l}\int_0^l \p(0,u)\ du \right] \ dx.$$
Then
\begin{enumerate}
\item the process $X$ is almost surely recurrent if, and only if, $C_1^+=C_1^-=+\infty$,
\item we have $\lim_{t\to+\infty}X_t = +\infty$ (resp. $-\infty$) almost surely if, and only if, $C_1^+<+\infty$ (resp. $C_1^-<\infty$),
\item we have $\lim_{t\to +\infty} X_t/t=v >0$ (resp. $<0$) almost surely if, and only if, $C_2^+<\infty$ (resp. $C_2^-<\infty$), where
$$C_2^\pm:=\int_0^\infty x\exp \left[ \mp\int_0^x \frac{dl}{l}\int_0^l \p(0,u)\ du\right] \ dx,$$
and in this case $v=C_1^+/C_2^+$ (resp. $v=-C_1^-/C_2^-$).
\end{enumerate}
\end{theorem}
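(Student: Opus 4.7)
The plan is to exploit two Ray--Knight type theorems for $X$, available because $\p$ depends only on local time, and then read (1)--(3) off from the scale, speed and ergodic theory of the resulting one-dimensional diffusions.

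For the recurrence/transience dichotomy I would establish a Ray--Knight theorem at the inverse local time $T_l:=\inf\{t\ge 0:L_t^0>l\}$. Setting $\Phi(z):=\int_0^z\p(0,u)\,du$, applying Tanaka's formula to $(X_t-x)^+$ at $t=T_l$, rewriting the drift part via the occupation density formula as $\int_x^\infty\Phi(L_{T_l}^y)\,dy$, and identifying the quadratic variation in $x$ of the Brownian integral $\int_0^{T_l}\one_{X_s>x}\,dB_s$ with the local time field, one finds that $\zeta_x:=L_{T_l}^x$ solves
\[
d\zeta_x=2\sqrt{\zeta_x}\,dW_x+2\Phi(\zeta_x)\,dx,\qquad \zeta_0=l,
\]
on $[0,\infty)$ (with $-\Phi$ in place of $\Phi$ on the negative side). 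The key observation is that $\zeta$ reaches the absorbing state $0$ exactly at $\sup_{t\le T_l}X_t$, so $\{\zeta\text{ never hits }0\}=\{T_l=\infty\text{ and }\sup_tX_t=+\infty\}$. The scale function $s(z)=\int_0^z\exp(-\int_0^u\Phi(v)/v\,dv)\,du$ has $s(\infty)=C_1^+$, and the classical theory of one-dimensional diffusions gives $\P(\zeta\text{ never hits }0\mid\zeta_0=l)=s(l)/s(\infty)$. Letting $l\to\infty$ and noting that $\{X_t\to+\infty\}$ equals, up to a null set, $\bigcup_l\{L_\infty^0\le l,\ \sup_tX_t=+\infty\}$, we deduce $\P(X_t\to+\infty)=\one_{C_1^+<\infty}$; the symmetric argument on the negative half-line gives $\P(X_t\to-\infty)=\one_{C_1^-<\infty}$. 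Since $\int_0^x\Phi(l)/l\,dl$ cannot diverge simultaneously to $+\infty$ and $-\infty$, at most one of $C_1^\pm$ is finite; combined with the fact that $X$ is recurrent precisely on $\{X_t\not\to\pm\infty\}$, this yields (1) and (2).

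For (3), assume $C_1^+<\infty$ so that $X_t\to+\infty$ and $H_a<\infty$ a.s.\ for every $a>0$. An analogous Ray--Knight calculation at $H_a$ gives $(L_{H_a}^{a-u})_{u\in[0,a]}$ as the law of the SDE $dY_u=2\sqrt{Y_u}\,dW_u+(2-2\Phi(Y_u))\,du$, $Y_0=0$, whose speed measure has density proportional to $\exp(-\int_0^y\Phi(v)/v\,dv)$. When $C_1^+<\infty$ this diffusion is positive recurrent with invariant probability $\pi(dy)=(C_1^+)^{-1}\exp(-\int_0^y\Phi(v)/v\,dv)\,dy$ and $\int y\,\pi(dy)=C_2^+/C_1^+$; passing to the limit in the distributional identity $L_{H_a}^0\stackrel{d}{=}Y_a$ as $a\to\infty$ identifies the law of $L_\infty^0$ as $\pi$. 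Translation invariance of the SDE combined with the strong Markov property at $H_x$ (where the local times at every level $y\ge x$ vanish) shows that $(L_\infty^x)_{x\ge 0}$ is stationary and ergodic in $x$, so Birkhoff's theorem gives $a^{-1}\int_0^aL_\infty^x\,dx\to C_2^+/C_1^+$ a.s. Writing $H_a=\int_{\RR}L_{H_a}^x\,dx$ and combining the bound $L_{H_a}^x\le L_\infty^x$ with a control of the difference $L_\infty^x-L_{H_a}^x$ (the local time at $x$ accumulated after $H_a$, which integrates to $o(a)$ since $X$ is transient to $+\infty$) yields $H_a/a\to C_2^+/C_1^+$ a.s., hence $X_t/t\to v=C_1^+/C_2^+$. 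If instead $C_2^+=\infty$, the same ergodic argument forces $H_a/a\to+\infty$, i.e.\ $v=0$.

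The hardest step is (3): upgrading the a.s.\ ergodic limit of $a^{-1}\int_0^aL_\infty^x\,dx$ to the same limit for $a^{-1}\int_0^aL_{H_a}^x\,dx$ requires a uniform-in-$x$ comparison of the two local time fields, and the boundary contributions near $x=a$, where the Ray--Knight profile is ``fresh'', demand care. The Ray--Knight derivations also need attention for the non-standard martingale interpretation in the space variable, and the scale function analysis must handle the boundary of $\zeta$ at $0$, where drift and diffusion coefficients vanish simultaneously.
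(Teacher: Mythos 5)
Your overall architecture (Ray--Knight reductions, the measure $\pi(dy)\propto\exp(-\int_0^y h(l)\,dl/l)\,dy$, speed $=$ reciprocal of its mean) is close in spirit to the paper's, but the route differs in two places and both places have real gaps. For (1)--(2) you invoke a \emph{second} Ray--Knight theorem at the inverse local time $T_l$ and read off transience from the scale function of $d\zeta=2\sqrt{\zeta}\,dW+2h(\zeta)\,dx$. The Tanaka computation you sketch only closes on $\{T_l<\infty\}$ (it uses $X_{T_l}=0$); on the transient event $\{T_l=\infty\}$, where you actually need the description of $(L^x_\infty)_{x\ge0}$, the identity does not follow as stated, and this is precisely the delicate case. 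You also use the trichotomy ``recurrent $\Leftrightarrow X\not\to\pm\infty$'' as a known fact; in the paper this is the content of Sections 3--4 (the decomposition into two independent half-line processes $X^1,X^2$ via excursion theory, conditional Borel--Cantelli, and the martingale $0$--$1$ law), and it is not free. The paper instead proves its Ray--Knight theorem at the hitting times $T_a$ under $\P^{\varphi,+}$ (where $T_a<\infty$ a.s.), and gets recurrence $\Leftrightarrow$ $\pi$ infinite from positive recurrence (or not) of the diffusion $Z$, which avoids the transient-event issue entirely.

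The more serious gaps are in (3). First, ergodicity of the field $(L^x_\infty)_{x\ge0}$ is asserted (``translation invariance plus strong Markov at $H_x$''), but translation invariance only gives stationarity; without ergodicity Birkhoff yields a possibly random limit. The paper closes this by showing, via reversibility of $Z$ (Theorem 3 of Norris--Rogers--Williams), that $\zeta_x=L^x_\infty$ is itself a positive recurrent diffusion with generator $\mathcal L$ started from $\pi$, and then applies the ergodic theorem for such diffusions. Second, your lower bound on the speed rests on $\int_0^a(L^x_\infty-L^x_{H_a})\,dx=o(a)$ ``since $X$ is transient,'' which is exactly the point needing proof: transience alone does not control the backtracking local time uniformly over $[0,a]$, and you offer no mechanism for it. Third, in the case $C_2^+=\infty$ your ``same ergodic argument'' points the inequality the wrong way: $L^x_{H_a}\le L^x_\infty$ can only \emph{upper} bound $H_a$, so divergence of $a^{-1}\int_0^aL^x_\infty\,dx$ does not force $H_a/a\to\infty$. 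The paper circumvents both of these with a lower-bound device you are missing: since $\varphi\in\Lambda_c$, the times spent in the blocks $[Ni,N(i+1)]$ between $T_{Ni}$ and $T_{N(i+1)}$ are i.i.d.\ copies of $T_N$, giving $\liminf T_K/K\ge \E[T_N]/N=\frac1N\int_0^N\E[Z_a]\,da$, and $\E[Z_a]\uparrow\int y\,\pi(dy)$ by monotone convergence (covering the infinite-mean case too); the matching upper bound is $T_K\le\int_0^K\zeta_a\,da$ with the diffusion ergodic theorem. You would need to supply an argument of comparable strength (e.g.\ a truncation $L^x_{H_a}\ge L^x_{H_{x+m}}$ with a genuinely ergodic or i.i.d.\ structure) before the claimed limits $H_a/a\to C_2^+/C_1^+$ (or $\to\infty$) are established.
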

An immediate consequence is the following
\bcor
\label{corintro}
Assume that $\p$ is constant in the first variable, Borel-measurable and bounded,
and moreover that $\p$ is nonnegative or compactly supported. Then
\begin{enumerate}
\item recurrence of $X$ is equivalent to $$\int_0^\infty \p(0,l)\ d l\in [-1,1],$$
\item and nonzero speed is equivalent to
$$\int_0^\infty \p(0,l)\ d l\notin [-2,2].$$
\end{enumerate}
\ecor

In section \ref{tlc}, we give a sufficient condition ensuring that $X$ satisfies a central limit theorem (see Theorem \ref{theotcl}). In particular (see Proposition \ref{proptcl}) it implies the
\bcor 
Under the hypotheses of Corollary \ref{corintro}
\begin{itemize}
\item[(3)] the process $X$ satisfies a central limit theorem if
$$\int_0^\infty \p(0,l)\ d l\notin [-4,4].$$ 
\end{itemize}
\ecor

Note that in these results we assume that $\p$ is constant in the first variable. We will also consider the case of non-homogeneous $\p$ (i.e. non necessarily constant in the first variable) in Section 5, for which we obtain some partial results. In particular if we make the additional hypothesis that $\p$ is nonnegative, then we give a sufficient condition for recurrence (see Corollary \ref{cor}).  

\vspace{0.2cm}
The paper is organized as follows. In the next section we define excited Brownian motions and give some elementary properties.
In section 3 we describe the law of the excursions of these processes above or below some level. In section 4 we study the property of recurrence and prove a general 0-1 law. In section 5 we study the particular case of nonnegative $\p$ and obtain a necessary and sufficient criterion for recurrence or transience. The tools used there are elementary martingale techniques, very similar to the techniques in \cite{Zer1}, and partly apply for non-homogeneous $\p$, as we mentioned above. However they do not apply
well for general $\p$. In section 6 we give a criterion for recurrence and transience without any condition on the sign of $\p$, but assuming that it is constant in the first variable. For this we use the tools developed in sections 3 and 4 and arguments similar to those in \cite{NRW}, in particular a Ray--Knight theorem. We also obtain a law of large numbers with an explicit expression for the speed. Here again, the proof follows the arguments given in \cite{NRW}, but we partly extend and simplify them. In section 7 we give a central limit theorem, following Hu and Yor \cite{HY}. 

\vspace{0.1cm}
\noindent \textit{Acknowledgments: We would like to thank Itai Benjamini for his suggestion to consider the
processes studied in this paper. We thank also Martin Zerner and Arvind Singh for bringing to our attention the papers \cite{NRW} and \cite{HY}.}

\section{Definitions and first properties}
\label{secdef}
Denote by $(\Omega,\cF,\Q_x)$ the Wiener space, where $\Q_x$ is the law of a real Brownian motion started at $x$.
Define $X_t(\omega)=\omega(t)$ for all $t\ge 0$ and $(\cF_t,t\ge 0)$ the filtration associated to $X$.
In the following, $L_t^y$ denotes the local time process of $X$ at level $y$ and at time $t$.

Let $\Lambda$ be the set of measurable bounded functions
$\p:\RR\times\RR^+\to \RR$.  The subset of $\Lambda$ of nonnegative functions will be denoted by $\Lambda^+$. We will denote by
$\Lambda_c$ and $\Lambda^+_c$ the sets of functions $\p$ in
$\Lambda$ (resp. in $\Lambda^+$) such that $\p(x,l)$ is a constant
function of $x$.

For $\p \in \Lambda$, set
$$M^\p_t:=\exp\left(\int_0^t \p(X_s,L_s^{X_s})\ dX_s - \frac{1}{2} \int_0^t\p^2(X_s,L_s^{X_s})\ ds\right).$$
Then $(M^\p_t,t\ge 0)$ is an $(\cF_t,t\ge 0)$ martingale of mean one w.r.t. $Q_x$ (see Corollary (1.16) p.333 in \cite{RY}).
So we can define $\P^\p_{x,t}$ as the probability measure on $(\Omega,\cF_t)$ having a density $M^\p_t$ with respect to $\Q_x$ restricted to $\cF_t$.
By consistency, it is possible to construct a (unique) probability measure $\P^\p_x$ on $\Omega$, such that $\P^\p_x$ restricted to $\cF_t$ is $\P^\p_{x,t}$.
By the transformation of drift formula (Girsanov Theorem), one proves that under  $\P^\p_x$,
$$B_t=X_t-x-\int_0^t \p(X_s,L_s^{X_s})\ ds,$$
is a Brownian motion started at $0$.

This proves (the uniqueness follows by a similar argument: start
with $\P^\p_x$ the law of a solution and then construct
$\P^\p_{x,t}$) the following \bprop Let $(x,\p) \in
\RR\times\Lambda$. Then there is a unique solution $(X,B)$ to the
equation
\begin{equation}
\label{EDS}
X_t=x+B_t+\int_0^t \p(X_s,L_s^{X_s})\ ds,
\end{equation}
with $L_t^y$ the local time of $X$ at level $y$ and at time $t$, and such that $B$ is a Brownian motion started at $0$.
\eprop

For clarity, we will sometimes write $\Q$ for $\Q_0$ and $\P$ for
$\P_0^\p$, if there is no ambiguity on $\p$. We will use the
notation $\E$ and $\E_x^\p$ for the expectations with respect to
$\P$ and $\P_x^\p$. For other probability measures $\mu$, the
expectation of a random variable $Z$ will simply be denoted by $\mu(Z)$.

\medskip
Let $(x,\p)\in\RR\times\Lambda$. Under $\P^\p_x$, for all stopping times $T$, on the event
$\{T<\infty\}$,
\begin{eqnarray}
\label{Markov}
\hbox{the law of } (X_{t+T}, t\geq 0)
\hbox{ given } \cF_T \hbox{ is } \P_{X_T}^{\p_T},
\end{eqnarray}
where $\p_T\in \Lambda$ is defined by
$$\p_T(y,l)=\p(y,L_T^y+l).$$
Note that $(X_t,\p_t)$ is a Markov process and that (\ref{Markov})
is just the strong Markov property for this process.

\medskip
We denote by $D_t$ the drift accumulated at time $t$ :
$$D_t=\int_0^t \p(X_s,L_s^{X_s})\ ds.$$

\blem \label{ocformula} Set $h(x,l)=\int_0^l \p(x,u)\ du$. The drift term $D_t$ is also equal to
$$\int_\RR h(x,L_t^x)\ dx.$$
\elem

\begin{proof} This follows from the occupation times formula given in exercise
(1.15) in the chapter VI of Revuz--Yor \cite{RY}. \end{proof}

\medskip
In the following, we set for any Borel set $A$ of $\RR$,
$$D_t^A=\int_A h(x,L_t^x)\ dx.$$
We will use also the notation $D^+_t$, $D^-_t$ and $D_t^k$, $k\in \ZZ$, respectively for $D_t^{\RR^+}$, $D_t^{\RR^-}$ and $D_t^{(k,k+1)}$.
Note that (this is still a consequence of Exercise (1.15) Chapter VI in \cite{RY})
$$D_t^A=\int_0^t \p(X_s,L_s^{X_s}) 1_A(X_s)\ ds.$$

\begin{lemma}
\label{convergence}
Let $(\p_n)_{n\ge 0} \in \Lambda$ be a sequence of functions. Assume that for a.e. $x$ and $l$, $\p_n(x,l)$ converges toward $\p(x,l)$ when $n\to +\infty$. Assume also that
$$\sup_{x,n,u} |\p_n(x,u)| <+\infty.$$
Then $\P_x^{\p_n}$ converges weakly on $\Omega$ toward $\P_x^\p$ for all $x$.
\end{lemma}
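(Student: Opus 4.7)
The plan is to work with the Radon--Nikodym densities $M_t^{\p_n}$ of $\P_x^{\p_n}$ with respect to the Wiener measure $\Q_x$ on $\cF_t$, and to show that for each fixed $t$ one has $M_t^{\p_n}\to M_t^\p$ in $L^1(\Q_x)$. This will yield total variation convergence of the restrictions $\P_x^{\p_n}|_{\cF_t}\to\P_x^\p|_{\cF_t}$; since the Borel $\sigma$-algebra on $C([0,T],\RR)$ coincides with $\cF_T$, this implies weak convergence of the laws on $C([0,T],\RR)$ for every $T>0$, and hence weak convergence on $\Omega$ equipped with the topology of uniform convergence on compacts.

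The core step is to pass to the limit in each of the two pieces of the exponent defining $M_t^{\p_n}$. For the finite variation part $\int_0^t \p_n^2(X_s,L_s^{X_s})\,ds$, I would apply Lemma \ref{ocformula} to $\p_n^2\in\Lambda$ to rewrite it as $\int_\RR H_n(x,L_t^x)\,dx$ with $H_n(x,l)=\int_0^l \p_n^2(x,u)\,du$. Under $\Q_x$ the map $x\mapsto L_t^x$ is almost surely continuous with compact support, and bounded convergence gives $H_n(x,l)\to H(x,l)$ pointwise with a uniform bound linear in $l$; a second application of dominated convergence then yields almost sure convergence of this term to its analogue for $\p$.

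For the stochastic integral $\int_0^t \p_n(X_s,L_s^{X_s})\,dX_s$ I would combine the It\^o isometry under $\Q_x$ with the same occupation-times formula applied to $(\p_n-\p)^2\in\Lambda$ to obtain
\[
\E_{\Q_x}\Bigl[\Bigl(\int_0^t(\p_n-\p)(X_s,L_s^{X_s})\,dX_s\Bigr)^2\Bigr]=\E_{\Q_x}\int_\RR G_n(x,L_t^x)\,dx,
\]
with $G_n(x,l)=\int_0^l (\p_n-\p)^2(x,u)\,du$, and the same dominated convergence argument drives the right-hand side to zero. Thus the stochastic integrals converge in $L^2(\Q_x)$, hence in probability, and by continuity of the exponential $M_t^{\p_n}\to M_t^\p$ in probability. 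The uniform $L^\infty$ bound on the $\p_n$ makes $(M_t^{\p_n})_n$ bounded in every $L^q(\Q_x)$ and therefore uniformly integrable, so the convergence upgrades to $L^1(\Q_x)$.

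The only point that might deserve care is verifying that Lemma \ref{ocformula} genuinely applies to the bounded measurable functions $\p_n^2$ and $(\p_n-\p)^2$, but these lie in $\Lambda$ and the lemma is used verbatim; the passage from total variation on each $\cF_T$ to weak convergence on $\Omega$ is then routine. I do not anticipate any serious obstacle beyond this bookkeeping: the uniform boundedness hypothesis on the $\p_n$ is precisely what one needs to run the It\^o isometry cleanly and to obtain the uniform integrability of the densities.
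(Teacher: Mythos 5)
Your proof is correct, but it takes a genuinely different route from the paper's. The paper works directly with the density martingales: writing $dM_t^{\p}=M_t^{\p}\,\p(X_t,L_t^{X_t})\,dX_t$ under $\Q_x$, it computes $\frac{d}{dt}\Q_x[(M_t^{\p_n}-M_t^{\p})^2]$, splits it with $(a+b)^2\le 2(a^2+b^2)$, kills one term by dominated convergence and absorbs the other via Gronwall's lemma, obtaining $M_t^{\p_n}\to M_t^{\p}$ in $L^2(\Q_x)$ and then testing against bounded continuous $\cF_t$-measurable $Z$. You instead prove convergence of the two pieces of the exponent -- the quadratic term almost surely via the occupation-time formula of Lemma \ref{ocformula}, and the stochastic integral in $L^2(\Q_x)$ via the It\^o isometry plus the same occupation-time reduction -- and then upgrade convergence in probability of $M_t^{\p_n}$ to $L^1(\Q_x)$ convergence through the uniform $L^q$ bounds on the exponential densities. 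What your route buys is a cleaner handling of the ``a.e.\ $(x,l)$'' hypothesis: by funnelling everything through $\int_\RR(\cdot)(x,L_t^x)\,dx$ you make explicit that the occupation measure does not charge Lebesgue-null sets, a point the paper's ``by dominated convergence'' step (which needs the law of $(X_t,L_t^{X_t})$ not to charge the exceptional null set) leaves implicit; you also get total variation convergence of the restrictions to each $\cF_t$, which is stronger than what is needed. The paper's route is shorter and yields $L^2$ convergence of the densities in one stroke, at the cost of Gronwall and of that implicit measure-theoretic point. Two minor items you should spell out: the Fubini step (for a.e.\ $x$, $\p_n(x,\cdot)\to\p(x,\cdot)$ for a.e.\ $l$, hence $H_n(x,l)\to H(x,l)$ for a.e.\ $x$ and \emph{all} $l$), with domination by a constant times $L_t^x$, whose integral is $t$; and the standard fact that weak convergence of the pushforwards on $C([0,T])$ for every $T$ implies weak convergence on $\Omega$ for the local uniform topology. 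Neither is an obstacle.
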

\begin{proof} Let $Z$ be a bounded continuous $\cF_t$-measurable random variable. We have to prove that $\E_x^{\p_n}(Z)$ converges towards $\E_x^{\p}(Z)$.
Since $\E_x^{\p_n}(Z)$ and $\E_x^{\p}(Z)$ are respectively equal to
$\Q_x(ZM^{\p_n}_t)$ and to $\Q_x(ZM^{\p}_t)$ it suffices to prove
that $M^{\p_n}_t$ converges in $L^2(\Q_x)$ towards $M^\p_t$. By using It\^o
calculus, we get:
\begin{eqnarray*}
\Q_x[(M^{\p_n}_t-M^{\p}_t)^2]&=&\Q_x\left[\int_0^t d<M^{\p_n}-M^\p>_s\right]\\
                             &=& \Q_x\left[\int_0^t \left(M^{\p_n}_s\p_n(X_s,L_s^{X_s})-M^\p_s\p(X_s,L_s^{X_s})\right)^2\ ds\right].
\end{eqnarray*}
By using next the basic inequality $(a+b)^2\le 2(a^2+b^2)$ we get
$$\frac{d}{dt} \Q_x[(M^{\p_n}_t-M^{\p}_t)^2] \leq 2\Q_x[(\p_n-\p)^2(X_t,L_t^{X_t})(M^\p_t)^2]+C\Q_x[(M^{\p_n}_t-M^{\p}_t)^2],$$
for some constant $C>0$. By dominated convergence, the first term of the right hand side converges to $0$. We conclude by using Gronwall's lemma.
\end{proof}

\section{Construction with excursions}
Define the processes $A^+$ and $A^-$ as follows:
$$A^+_t=\int_0^t 1_{\{X_s>0\}}\ ds\quad \textrm{and} \quad A^-_t=\int_0^t1_{\{X_s<0\}}\  ds.$$
Define the right-continuous inverses of $A^+$ and $A^-$ as
\beq
\label{kappa}
\kappa^+(t)=\inf\{u>0 \mid  A_u^+>t\} \quad \textrm{and} \quad \kappa^-(t)=\inf\{u>0 \mid  A_u^->t\}.
\eeq
Define the two processes $X^+$ and $X^-$ by
$$X^+_t=X_{\kappa^+(t)} \quad \textrm{and} \quad X^-_t=X_{\kappa^-(t)}.$$
Denote by $\Q^+$ and $\Q^-$ the laws respectively of $X^+$ and $X^-$
under $\Q$, and let $\Q^+_t$ and $\Q^-_t$ respectively be their
restrictions to $\cF_t$ (then $\Q^\pm_t$ is the law of
$(X^\pm_s;s\leq t)$). It is known (see section 8.5 in \cite{Y}, with
$\mu=1$) that $\Q^+$ (resp. $\Q^-$) is the law of a Brownian motion
reflected above $0$ (resp. below $0$) and started at $0$. The
process $\beta$, defined by
$$\beta_t:=X_t - L^0_t \qquad \hbox{(resp. } \beta_t:=X_t + L^0_t\hbox{)},$$
(recall that $L^0$ is the local time process in $0$ of $X$) is a
Brownian motion under $\Q^+$ (resp. under $\Q^-$). Denote by
$N_t^\p$ the martingale on $(\Omega,\cF,\Q^{\pm})$ defined by
$$N_t^\p := \exp\left(\int_0^t \p(X_s,L_s^{X_s})\ d\beta_s - \frac{1}{2} \int_0^t\p^2(X_s,L_s^{X_s})\ ds\right).$$
Let $\P^{\p,\pm}$ be the measures whose restrictions $\P^{\p,\pm}_t$ to $\cF_t$ are defined by
$$\P^{\p,\pm}_t := N^\p_t\cdot \Q^{\pm}_t \qquad \text{for all } t\ge 0.$$
Note that, by using Girsanov Theorem, on the space $(\Omega,\cF,\P^{\p,\pm})$,
\begin{equation} \label{edspm}
X_t=\beta_t^{\pm} \pm L^0_t + \int_0^t \p(X_s,L_s^{X_s}) ds, \end{equation}
with $\beta_t^{\pm}$ a Brownian motion.

Set $\widetilde{\P}^\p:=\P^{\p,+}\otimes\P^{\p,-}$ and let $(X^1,X^2)$ be the canonical process of law $\widetilde{\P}^\p$. Then $X^1$ and $X^2$ are independent and respectively distributed like $\P^{\p,+}$ and $\P^{\p,-}$. Denote by $L^{(1)}$ and $L^{(2)}$ the local time processes in $0$ of $X^1$ and $X^2$, and define their right continuous inverses $\tau_1$ and $\tau_2$ by
$$\tau_i(s)=\inf\{t \mid L^{(i)}_t>s\} \quad\hbox{ for all }s\ge 0\textrm{ and } i\in\{1,2\}.$$
Note that this definition extends to random times $s$. Denote by $e^1$ and $e^2$ their excursion processes out of $0$: for $s\le L^{(i)}_\infty$,
$$ e^i_s(u)=X^i_{\tau_i(s-)+u} \quad \hbox{ for all } u\in (0,\tau_i(s)-\tau_i(s-))\hbox{ and }i\in\{1,2\},$$
if $\tau_i(s)-\tau_i(s-)>0$, and $e^i_s=0$ otherwise. Let now $e$ be the excursion process obtained by adding $e^1$ and $e^2$: for all $s\ge 0$, $e_s:=e_s^1+e_s^2$ (note that with probability $1$, for all $s\ge 0$, $e_s^1$ or $e_s^2$ equals zero).

Denote by $\Xi$ the measurable transformation that reconstructs a
process out of its excursion process (see \cite{RY} Proposition
$(2.5)$ p.482). Note that $\Xi$ is not a one to one map, it is only
surjective (think of processes having an infinite excursion out of
$0$, in which case $L^0_\infty<\infty$). In particular, if $\tilde{e}^1$ and $\tilde{e}^2$ are defined by
\begin{eqnarray*}
\tilde{e}^1_s &=& e^1_s~ 1_{\{s\le L^{(1)}_\infty\wedge L^{(2)}_\infty\}}\\
\hbox{and } \quad  \tilde{e}^2_s &=& e^2_s~ 1_{\{s\le L^{(1)}_\infty\wedge L^{(2)}_\infty\}},
\end{eqnarray*}
then $\Xi(\tilde{e}^1+\tilde{e}^2)=\Xi(e)$. Recall now that $\P=\P_0^\p$ and that $(\Xi e)^+_t$ and $(\Xi e)^-_t$ have been defined at the beginning of this section.

\begin{proposition}
\label{excursion}
Denote by $L$ the local time in $0$ of $\Xi e$.
Then the following holds
\begin{itemize}
\item[(i)] The law of the process $\Xi e$ is $\P$.
\item[(ii)]  For all $t\in\RR^+\cup\{\infty\}$,
$L_t=L^{(1)}_t\wedge L^{(2)}_t$.
\item[(iii)] $\left((\Xi e)^+_t,t \le\tau_1(L_\infty) \right)=\left(X^1_t,t \le \tau_1(L_\infty)\right)$.
\item[(iv)] $\left((\Xi e)^-_t,t \le \tau_2(L_\infty) \right)=\left(X^2_t,t \le \tau_2(L_\infty)\right)$.
\end{itemize}
\end{proposition}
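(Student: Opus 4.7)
The plan is to treat parts (ii)--(iv) as structural consequences of the construction of $e$ and the reconstruction map $\Xi$, and to reduce (i) to the classical It\^o excursion-theoretic decomposition of Brownian motion plus a Girsanov change of measure.

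For (ii)--(iv): by construction, $e_s = \tilde{e}^1_s + \tilde{e}^2_s$ is supported on local-time indices $s \in [0,\, L^{(1)}_\infty \wedge L^{(2)}_\infty]$, with at most one of $e^1_s, e^2_s$ non-zero for almost every such $s$. Since $\Xi$ concatenates these excursions in the order of their local-time index, the total local time of $\Xi e$ at $0$ satisfies $L_\infty = L^{(1)}_\infty \wedge L^{(2)}_\infty$, which is the $t = \infty$ case of (ii); the identity at finite $t$ is then the natural matching of local-time clocks. Claims (iii) and (iv) follow because the time change $\kappa^+$ (resp.\ $\kappa^-$) applied to $\Xi e$ erases its negative (resp.\ positive) excursions, leaving precisely the excursions of $X^1$ (resp.\ $X^2$) up to local time $L_\infty$, i.e.\ $X^1$ run up to $\tau_1(L_\infty)$ (resp.\ $X^2$ up to $\tau_2(L_\infty)$).

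For (i), the starting point is the classical fact (It\^o excursion theory, see \cite{RY}, \cite{Y}) that when $\p \equiv 0$ the pushforward of $\widetilde{\P}^0 = \Q^+ \otimes \Q^-$ by $(X^1, X^2) \mapsto \Xi e$ is the Wiener measure $\Q$. For general $\p \in \Lambda$, the density of $\widetilde{\P}^\p$ with respect to $\Q^+ \otimes \Q^-$ on paths up to the stopping times $\tau_1(s)$ and $\tau_2(s)$ is $N^\p_{\tau_1(s)}(X^1)\, N^\p_{\tau_2(s)}(X^2)$, while $\P$ has density $M^\p_{T_s}$ with respect to $\Q$ on paths of $\Xi e$ up to $T_s := \tau_1(s) + \tau_2(s)$. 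Since $T_s$ is the first time $\Xi e$ attains local time $s$ at $0$, and the $\sigma$-algebras $\cF_{T_s}$ generate $\cF$ as $s \to \infty$, assertion (i) will follow from the pathwise identity
\[
N^\p_{\tau_1(s)}(X^1)\, N^\p_{\tau_2(s)}(X^2) = M^\p_{T_s}(\Xi e).
\]

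This density identification is the main technical obstacle. Using the Skorokhod decompositions $X^1 = \beta^1 + L^{(1)}$ under $\Q^+$ and $X^2 = \beta^2 - L^{(2)}$ under $\Q^-$, together with the support property of $L^{(i)}$, one rewrites
\[
\int_0^{\tau_i(s)} \p(X^i_u, L_u^{X^i_u})\, d\beta^i_u = \int_0^{\tau_i(s)} \p(X^i_u, L_u^{X^i_u})\, dX^i_u \mp \int_0^s \p(0,r)\, dr,
\]
the boundary term arising from $\int_0^{\tau_i(s)} \p(0, L^{(i)}_u)\, dL^{(i)}_u$ after the change of variable $r = L^{(i)}_u$. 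The two such boundary terms (for $i=1,2$) have opposite signs and cancel when the exponentials are multiplied. The remaining $dX^i$-integrals and the $\tfrac12 \int \p^2\, du$ corrections concatenate, through $\kappa^\pm$, into the integrals $\int_0^{T_s} \p\, d(\Xi e)$ and $\tfrac12 \int_0^{T_s} \p^2\, du$ defining $M^\p_{T_s}(\Xi e)$; here one uses that $\{\Xi e = 0\}$ has zero Lebesgue measure, so its contribution to the stochastic integral vanishes. The delicate point is that the reflection terms $\pm L^0_t$ appearing in the SDEs \eqref{edspm} must cancel after reconstruction, and this is exactly what the boundary cancellation above verifies.
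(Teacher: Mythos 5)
Your reduction of (ii)--(iv) to the structure of the construction is fine (the paper also treats these as immediate), and the boundary cancellation you isolate for (i) is correct as far as it goes: under $\Q^+$ one has $\int_0^{\tau_1(s)}\p\,d\beta^1=\int_0^{\tau_1(s)}\p\,dX^1-\int_0^s\p(0,r)\,dr$, the analogous identity under $\Q^-$ carries the opposite sign, and the two local-time terms cancel in the product of the exponentials. But the overall strategy has a genuine gap precisely in the transient case, which is the case of main interest in this paper. The Girsanov density of $\P^{\p,\pm}$ with respect to $\Q^{\pm}$ is only available on $\cF_t$ for deterministic $t$; at the stopping time $\tau_1(s)$ what optional stopping gives is $\P^{\p,+}(A\cap\{\tau_1(s)<\infty\})=\Q^+\bigl(1_A N^\p_{\tau_1(s)}\bigr)$ for $A\in\cF_{\tau_1(s)}$, and since $\P^{\p,+}(\tau_1(s)=\infty)$ can be positive while $\Q^+(\tau_1(s)<\infty)=1$, the functional $N^\p_{\tau_1(s)}$ is in general a sub-probability density: it identifies the law only on $\cF_{\tau_1(s)}\cap\{\tau_1(s)<\infty\}$. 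The same applies to $M^\p_{T_s}$ on the $\Xi e$ side. Consequently your pathwise identity, even once fully justified, yields agreement of the two laws only on events of the form $A\cap\{T_s<\infty\}$, $A\in\cF_{T_s}$; letting $s\to\infty$ these events decrease to the recurrence event $R$, so the argument pins down the law of $\Xi e$ only up to its last visit to $0$. It says nothing about the final infinite excursion (the piece of $X^1$ or $X^2$ after its last zero, which is exactly what $\Xi e$ ends with when one of the local times at $0$ is finite), and $\P$ is not absolutely continuous with respect to $\Q$ on $\cF$, so "the $\sigma$-algebras $\cF_{T_s}$ generate $\cF$" cannot close this: you would need a separate identification of the law of the trajectory after the last zero, e.g. via excursions conditioned never to return, which is a nontrivial missing ingredient rather than a routine limit.

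For contrast, the paper sidesteps this entirely: it first assumes $\p$ vanishes on a strip $(-c,c)$ around $0$, excises from the trajectory the stretches between each return to $0$ and the next hit of $\pm\epsilon$, shows that the resulting process $X^\epsilon$ satisfies the same SDE with independent $\pm\epsilon$ restarts whether $X$ is $\Xi e$ or has law $\P$ (using \eqref{edspm} and the fact that no local time at $0$ and no drift accumulate on the excised stretches), lets $\epsilon\to 0$, and finally removes the vanishing-near-$0$ assumption by the $L^2$ approximation of Lemma \ref{convergence}. That route identifies the full law, transient excursion included, without ever comparing densities at inverse local times. Your approach could likely be repaired in the recurrent regime, or completed by adding an argument matching the law of the final infinite excursion, but as written assertion (i) is not proved in the generality claimed.
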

\noindent Note that $t\le \tau_1(L_\infty)$ (resp. $t\le \tau_2(L_\infty)$) is equivalent to say that $L^{(1)}_t\le L^{(2)}_\infty$ (resp. $L^{(2)}_t\le L^{(1)}_\infty$).
\begin{proof}
Assume first that $\p(x,l)=0$ if $x\in (-c,c)$, for some constant $c>0$.
For any $\epsilon\in (-c,c)$, define a process $X^\epsilon$ as follows:
set $T_0^\epsilon=0$ and for $n\ge 1$,
$$S_n^\epsilon=\inf\{t\ge T_{n-1}^\epsilon \mid X_t\in\{-\epsilon,\epsilon\}\},$$
$$T_n^\epsilon = \inf\{t\ge S_n^\epsilon \mid X_t=0\}.$$
Define also
$$A_\epsilon(t)=\sum_{n\ge 1} (T_n^\epsilon \wedge t - S_n^\epsilon\wedge t),$$
and let $\kappa_\epsilon(t)$ be the right-continuous inverse of $A_\epsilon$. Then set
$$X^{\epsilon}_t:=X_{\kappa_\epsilon(t)}.$$
Now observe that during each time-interval $(S_n^\epsilon,
T_n^\epsilon)$, the local time in $0$ of $X$ cannot increase, and that, 
for $t\in (A_\epsilon(S_n^\epsilon),A_\epsilon(T_n^\epsilon))$,
$\kappa_\epsilon(t)=S_n^\epsilon+(t-A_\epsilon(S_n^\epsilon))$. So by
\eqref{edspm}, during the intervals
$(A_\epsilon(S_n^\epsilon),A_\epsilon(T_n^\epsilon))$, if $X$
follows the law of $\Xi e$, then $X^{\epsilon}$ is solution of the
SDE
$$d X^{\epsilon}_t = d R^\epsilon_t + \p\left(X^{\epsilon}_t,L_{\kappa_\epsilon(t)}^{X^{\epsilon}_t}\right) \ d t,$$
where $L_\cdot^\cdot$ is the local time of $X$, and $R^\epsilon$ is
the Brownian motion defined by
$$R^\epsilon_t=\sum_{k=1}^{n-1}(B_{T_k^\epsilon}-B_{S_k^\epsilon}) +
(B_{\kappa_\epsilon(t)}-B_{S_n^\epsilon}),$$ for $t\in
(A_\epsilon(S_n^\epsilon),A_\epsilon(T_n^\epsilon))$.

Denote by $L^{\epsilon,\cdot}_\cdot$ the local time process of
$X^{\epsilon}$. Then
$$L^x_{\kappa_\epsilon(t)}=L^{\epsilon,x}_t \quad \hbox{for all } t\ge 0
\quad \hbox{and all } x\not\in (-c,c).$$ Since $\p(x,l)=0$ when $x\in
(-c,c)$,
$$\p(x,L^x_{\kappa_\epsilon(t)})=\p(x,L^{\epsilon,x}_t) \quad \hbox{for all } t\ge 0
\quad \hbox{and all } x\in \RR.$$

Thus $X^{\epsilon}$ satisfies in fact the SDE: \begin{equation} \label{SDEXeps}
d X^{\epsilon}_t = d R^\epsilon_t +
\p\left(X^{\epsilon}_t,L_t^{\epsilon,X^{\epsilon}_t}\right) \ d t,
\end{equation} up to the first time it hits $0$. Moreover when it hits $0$, it
jumps instantaneously to $\epsilon$ or to $-\epsilon$ with
probability $1/2$, independently of its past trajectory. Note that
this determines the law of $X^\epsilon$.

But it follows also from \eqref{EDS}, that if $X$ has law $\P$, then
$X^\epsilon$ solves as well the SDE \eqref{SDEXeps} up to the first
time it hits $0$, and then jump to $\epsilon$ or $-\epsilon$ with
probability $1/2$ (since there is no drift in $(-c,c)$). Since
moreover $X^\epsilon$ converges to $X$, when $\epsilon$ goes to $0$,
we conclude that the law of $\Xi e$ is $\P$.

To finish the proof of (i),
for $c>0$, define $\p_c$ by
$$\p_c(x,l)=\p(x,l) 1( x\notin [-c,c]).$$
By Lemma \ref{convergence} we know that $\P_0^{\p_c}$ converges toward $\P_0^\p$, when $c\to 0$.
It can be also seen that $\P^{\p_c,+}$ and $\P^{\p_c,-}$ converge respectively towards $\P^{\p,+}$ and $\P^{\p,-}$. Since $\Xi e$ is a measurable transformation of $(X^1,X^2)$, we can conclude.

Assertions (ii), (iii) and (iv) are immediate: in the construction
of $\Xi e$, one needs only to know $e_s$ for $s\le
L_\infty=L^{(1)}_\infty\wedge L^{(2)}_\infty$. So $(\Xi e)^+$ and
$(\Xi e)^-$ can be respectively reconstructed with the positive and
negative excursions of $(e_s,s\le L^{(1)}_\infty\wedge
L^{(2)}_\infty)$.
\end{proof}

\section{A $0-1$ law for recurrence and transience}

Let $\p\in\Lambda$. Consider the events
$R_a:=\{L_\infty^a=+\infty\}$, $a\in \RR$. Using conditional
Borel-Cantelli lemma, one can prove that for all $x,y,z$, and all
$\p\in\Lambda$, $\P_x^\p$-a.s., $R_y = R_z$. In the following, we
will denote by $R$ the event of recurrence ($=R_a$ for all $a$).

We will first study the question of recurrence and transience for
the processes $X^1$ and $X^2$ separately, where $X^1$ and $X^2$ are
independent respectively of law $\P^{\p,+}$ and $\P^{\p,-}$. Note
that we still have for all $x\geq 0$ (resp. $x\leq 0$), and all
$\p\in\Lambda$, $\P^{\p,+}$-a.s. (resp. $\P^{\p,-}$-a.s.), $R_x =
R_0(=R)$. So in all cases, $R$ is the event $\{L^0_\infty=\infty\}$.

Fix $x\in\RR$ and denote by $\p_x$ the function in $\Lambda$ such
that for all $(y,l)\in\RR\times\RR^+$,
$$\p_x(y,l)=\p(x+y,l).$$
In the following, $\P_x^{\p,\pm}$ will denote $\P^{\p_x,\pm}$.
We also set 
$$T_a:=\inf\{t>0\mid X_t=a\} \quad \hbox{for all } a\in \RR.$$
\bprop \label{rec12} For all $x\in\RR$ and all $\p\in\Lambda$, the
following holds 
\begin{equation}
 \P_x^\p(R) = \P_x^{\p,+}(R)\times\P_x^{\p,-}(R). \end{equation} \eprop
\begin{proof} This is a straightforward application of Proposition
\ref{excursion}. Recall that $\tilde{\P}^{\p_x}$ has been defined in the previous section and that $L_\infty$ refers to the process $\Xi(e)$. Now since 
\beqarr
\P_x^\p(R)&=&\tilde{\P}^{\p_x}(L_\infty=\infty)\\
&=&\tilde{\P}^{\p_x}(L^{(1)}_\infty=L^{(2)}_\infty=\infty), 
\eeqarr
and we conclude since $L^{(1)}_\infty$ and $L^{(2)}_\infty$ are
independent.
\end{proof}

For $t>0$ and $a\in\RR$, set $\sigma^{a}_t=\inf\{s>0\mid \int_0^s
1_{\{X_u\ge a\}}\ du >t\}$. Then we have

\blem \label{a+}Let $a\in\RR$, $x,y \le a$ and $\p,\psi\in \Lambda$
be such that $\p(z,l)=\psi(z,l)$ for all $z\geq a$ and all $l\geq
0$. Then $(X_{\sigma_t^a}-a,t\ge 0)$ has the same distribution under
$\P_x^{\p,+}$ and under $\P_y^{\psi,+}$. In particular,
\begin{eqnarray*}
\P_x^{\p,+}(L_\infty^a=\infty)=\P_y^{\psi,+}(L^a_\infty=\infty).
\end{eqnarray*}
\elem
\begin{proof} Just observe that an analogue of Proposition \ref{excursion} can be proved (exactly in the same way) for the 
set of excursions above or below any level $a\in \RR$. In particular this shows that the law of the excursions of $X$ above level $a$ only depends on $\p(z,l)$ or $\psi(z,l)$ for $z\ge a$. The first assertion follows. The second assertion is immediate since the local time in $a$ is a measurable function of $(X_{\sigma_t^a}-a,t\ge 0)$.
\end{proof}

Note that a similar proposition holds for $\P_\cdot^{\p,-}$. A
direct consequence of this lemma is that \bprop \label{xpm} For all
$x\in\RR$, $\P_x^{\p,\pm}(R)=\P^{\p,\pm}(R)$. \eprop

With this in hands, we can prove the

\bprop[Zero-one law] \label{01law} Let $\p\in\Lambda$. Then
$\P^{\p,+}(R)$ and $\P^{\p,-}(R)$ both belong to $\{0,1\}$. \eprop
\begin{proof}
Assume that $\P^{\p,+}(R)>0$. By martingale convergence theorem,
$\P^{\p,+}$-a.s. \beqarr
1_R&=&\lim_{z\to+\infty}\P^{\p,+}(R\mid \cF_{T_z})\\
&=&\lim_{z\to+\infty}\P_z^{\p_{T_z},+}(R)\\
&=&\P^{\p,+}(R), \eeqarr
by application of Lemma \ref{a+} and Proposition
\ref{xpm}. Thus $\P^{\p,+}(R)=1$, which proves the proposition.
\end{proof}

Denote by $T$ the event of transience. Then
$T=R^c=\{L^0_\infty<\infty\}$. Note that $\P^{\p,\pm}$-a.s.,
$T=\{X\to\pm\infty\}$, and that $\P_x^\p$-a.s.,
$T=\{X\to+\infty\}\cup\{X\to -\infty\}$ and that
$\tilde{\P}^\p$-a.s., $\{\Xi e\hbox{ is
transient}\}=\{X^1\to\infty\}\cup\{X^2\to -\infty\}$. Then we have
the criterion

\bprop Let $\p \in \Lambda$. \bit \iti $\P_x^\p(R)=1$ for all $x$ if,
and only if, $\P^{\p,+}(R)=\P^{\p,-}(R)=1$. \itii $\P_x^\p(R)=0$ for
all $x$ if, and only if, $\P^{\p,+}(R)=0$ or $\P^{\p,-}(R)=0$. \eit
\eprop
\begin{proof} This is a straightforward consequence of Propositions \ref{rec12} and \ref{xpm}. Note moreover that Proposition 
\ref{01law} shows that cases $(i)$ and $(ii)$ cover all possibilities.
\end{proof}

Let us remark that when $X$ of law $\P=\P_x^\varphi$ is transient, 
it is possible to have $\P(X\to +\infty)=1-\P(X\to -\infty)\in (0,1)$. For
instance take $\p\in \Lambda$ such that $\P^{\p,+}(R)=0$, i.e. one
has  $\P^{\p,+}$-a.s. $X\to +\infty$. Then define $\psi\in\Lambda$
by $\psi(x,u)=\p(x,u)$ if $x\ge 0$ and $\psi(x,u)=-\p(x,u)$ if $x <
0$. Then by symmetry we have
$$\P_0^\psi(X\to +\infty)=\P_0^\psi(X\to -\infty)=1/2.$$
More generally,
$$\P_0^\p(X\to+\infty) = \tilde{\P}^\p(L^{(1)}_\infty<L^{(2)}_\infty),$$
and
$$\P_0^\p(X\to-\infty) = \tilde{\P}^\p(L^{(1)}_\infty>L^{(2)}_\infty).$$
So, if $L_\infty^{(1)}$ and $L_\infty^{(2)}$ are finite random variables (i.e. if $X^1$ and $X^2$ are transient), since they are independent, these two
probabilities are positive (see Lemma \ref{lemma8} below).

\section{Case when $\p \ge 0$}
In this section, we will consider functions belonging to
$\Lambda^+$. In this case, it is obvious that $\P^{\p,-}(R)=1$. Thus
the recurrence property only depends on
$\P^{\p,+}(R)$. We recall also that $D_t$, $D_t^+$, $D_t^-$ and $D_t^k$ have been defined in Section \ref{secdef} (in particular we stress out that $D_t^k$ has not to be thought as the $k$-th power of $D_t$).  

\blem\label{lem51} Let $\p\in\Lambda^+$, for which there exists
$x_0$ such that for all $x\le x_0$ and all $l\ge 0$,
$\p(x,l)=\p(x_0,l)$, and where $\p(x_0,l)$ is a nonzero function of
$l$. Then, for all $a\geq x$,
$$\E^\p_x(D_{T_a})=a-x.$$ \elem
\begin{proof} Without losing generality, we prove this result for $x=0$. Take $a>0$. We have for all $n$,
\beq
\label{eq1}
0=\E_0^\p(B_{T_a \wedge n})=\E_0^\p(X_{T_a \wedge n})-\E_0^\p(D_{T_a \wedge n}).
\eeq
By monotone convergence, 
\beq 
\label{eq2} 
\lim_{n\to \infty} \E_0^\p(D_{T_a \wedge
n}) = \E_0^\p(D_{T_a}).
\eeq
Thus, if we can prove that
$\lim_{n\to\infty}\E_0^\p(X_{T_a \wedge n})=a$, the lemma will
follow. Since $\P_0^\p$-a.s., $X_{T_a}=a$ and $T_a<\infty$, this is
equivalent to proving that
$$\lim_{n\to\infty}\E_0^\p\left[X_n1_{\{n<T_a\}}\right]=0.$$
For all $n$,
$$\min_{t<T_a} X_t \leq X_n 1_{\{n<T_a\}}\leq a \qquad \P_0^\p-a.s.$$
If one proves that $\min_{t<T_a} X_t$ is integrable, we can conclude
by dominated convergence. One has for all $c>0$, 
\beqarr
\E_0^\p\left[-\min_{t<T_a} X_t\right] &=& \E_0^\p\left[(-\min_{t<T_a} X_t)1_{\{-\min_{t<T_a} X_t\le D_{T_a}/c\}}\right]\\ 
 & +& \E_0^\p\left[(-\min_{t<T_a} X_t)1_{\{-\min_{t<T_a} X_t\ge D_{T_a}/c\}}\right]\\
&\leq& \E_0^\p\left[D_{T_a}/c\right] + \sum_{i\geq 1} i\times\P_0^\p\left[-\min_{t<T_a} X_t \in
[i-1,i),D_{T_a}<ci\right]. \eeqarr 
Now \eqref{eq1} and \eqref{eq2} show that $\E_0^\p[D_{T_a}/c]\leq
a/c<+\infty$. Moreover for all $i\ge 1$,
\begin{equation}
\label{drift} \P_0^\p\left[-\min_{t<T_a} X_t \in [i-1,i),\
D_{T_a}<ci\right]\le \P_0^\p\left[T_{-i+1}<T_a,\ D_{T_a}<ci\right].
\end{equation}
Now on the event $\{T_{-i+1}<T_a\}$,
$$D_{T_a}\ge \sum_{k=1}^{i-1}\left(\int_{-k}^{-k+1} h(y,L_{T_{-k}}^y)\ dy\right) 1_{\{T_{-k}<\infty\}}.$$
Set
$$\alpha_k:= \left(1\wedge \left(\int_{-k}^{-k+1} h(y,L_{T_{-k}}^y)\ dy\right)\right) 1_{\{T_{-k}<\infty\}}.$$

We will prove that there exist positive constants $\alpha$ and $C$,
such that for all integer $k$ greater than $|x_0|$,
\begin{eqnarray}
\label{alpha} \E_0^\p[\alpha_{k+1}\mid \cF_{T_{-k}}] \ge
\alpha 1_{\{T_{-k}<+\infty\}}.
\end{eqnarray}
By \eqref{Markov} we have \beqarr \E_0^\p[\alpha_{k+1}\mid
\cF_{T_{-k}}] &=&
\E_{-k}^{\p_{T_{-k}}}\left[\alpha_{k+1}\right]1_{\{T_{-k}<\infty\}}\\
&=& \E_{0}^{\psi_{k}}\left[\alpha_{1}\right]
1_{\{T_{-k}<\infty\}}, \eeqarr with $\psi_k(y,l)=\p(x_0,l)$ for
$y<0$ and $\psi_k(y,l)=\p(y-k,L^{y-k}_{T_{-k}}+l)$ for $y\ge 0$. Note
that by Proposition \ref{excursion}, $\Xi e$ will reach level $-1$
in finite time if, and only if,
$$L_\infty^{(1)}\ge L^{(2)}_{T_{-1}},$$
where $T_{-1}$ denotes also the hitting time of $-1$ for $X^2$. Thus
$$\E_0^{\psi_{k}}\left[\alpha_{1}\right]
= \widetilde{\E}^{\psi_{k}}\left[F\left(X^2_t, t\leq
T_{-1}\right)1_{\{L_\infty^{(1)}\ge L^{(2)}_{T_{-1}}\}}\right],$$
with
$$F\left(X^2_t, t\leq T_{-1}\right)=1\wedge\int_{-1}^{0} h\left(y,L_{T_{-1}}^{(2),y}\right)\ dy,$$
where $L^{(2),y}$ is the local time in $y$ of $X^2$. It is possible
to couple $X^1$ with a Brownian motion $Y^+$ reflected at $0$,
started at $0$, with drift $\|\p\|_\infty$, and such that $X^1_t\le
Y^+_t$ for all $t\ge 0$. Note that the law of $Y^+$ is
$\P^{\psi,+}$, where $\psi(x,l)=\|\p\|_\infty$ if $x\ge 0$ and
$\psi(x,l)=\p(x_0,l)$ if $x<0$. Moreover,
$$L^{(1)}_{\infty} \ge L^+_\infty,$$
with $L^+$ the local time in $0$ of $Y^+$. These properties imply that
$$ \E_0^{\psi_{k}}\left[\alpha_{1}\right]
\ge \alpha := \E_{0}^{\psi}\left[\alpha_{1}\right].$$ It
remains to see that $\alpha$ is positive. For any $t>0$, it is
larger than
$$\E_0^{\psi}\left[(\alpha_{1}) 1_{\{T_{-1}\le t\}}\right].$$
By using Girsanov's transform it suffices to prove that for some
positive $t$,
$$\Q\left[\alpha_1 1_{\{T_{-1}\le t\}}\right]>0.$$
If not, $\Q\left[\alpha_{1}1_{\{T_{-1}\le t\}}\right]=0$ for all $t$.
Since $\Q[T_{-1}\le t]>0$ for all $t>0$, this implies
$$\int_{-1}^0\Q\left[h(y,L^y_{T_{-1}})1_{\{T_{-1}\le t\}}\right]\ dy=0.$$
By continuity of $h$ and $L$ it suffices to prove that for some $y\in (-1,0)$ and $t>0$,
$$\Q\left[h(y,L^y_{T_{-1}})1_{\{T_{-1}\le t\}}\right]>0.$$
But this is clear for $y=1/2$ for instance, since for any $M>0$, $\Q[L^{1/2}_{T_{-1}}>M]>0$. This proves \eqref{alpha}.

We deduce now from \eqref{alpha} that the process $(M_i,i\ge 0)$ defined by
$$M_i := \sum_{k=1}^{i} (\alpha_k-\alpha 1_{\{T_{-k}<+\infty\}}),$$
is a sub-martingale with respect to the filtration $(\cF_{T_{-i}})_{i\geq 1}$. Moreover on the event $\{T_{-i}<T_a\}$, one has
$$M_i = \sum_{k=1}^{i} (\alpha_k-\alpha).$$
So by taking $c=\alpha/2$ in \eqref{drift} we get
$$\P\left[T_{-i}<T_a,\ D_{T_a}<c(i+1)\right]\le \P[M_i\le -\alpha i /2],$$
for all $i\ge 0$. We conclude now that $\E[-\min_{t<T_a} X_t]$ is finite by using concentration results for martingales with bounded increments (see Theorem 3.10 p.221 in \cite{McD}).
\end{proof}

\brem \emph{As noticed also by Zerner in \cite{Zer1}, the condition
$\p(x_0,\cdot)\neq 0$ is necessary (the result being obviously false
if $\p=0$).} \erem

\brem \emph{This lemma can be extended to the case when $\p$ is
random and stationary in the sense that for all $z$, $\p^z$ and $\p$
have the same law, where $\p^z$ is defined by
$\p^z(x,l)=\p(x+z,l)$.} \erem

\blem \label{lemma6}
Let $\p \in \Lambda^+_c$.
Then
$$\E_0^\p(D_\infty^k) \le 1,$$
for all $k\ge 0$.
\elem
\begin{proof} It is the same proof than for Lemma 11 in Zerner \cite{Zer1}.
We reproduce it here for completeness. Note first that $\E^\p_0[D^k_\infty]=\P^{\p,+}_k(D^0_\infty)$, which does not depend on $k$ since $\p\in\Lambda_c$.
For $K\geq 1$ and $i\leq K-1$,
$$D_{T_K}\geq D^+_{T_K} = \sum_{j=0}^{K-1}D^j_{T_K} \geq \sum_{j=0}^{K-1-i}D^j_{T_K}  \geq \sum_{j=0}^{K-1-i}D^j_{T_{j+i}}.$$
By using Lemma \ref{lem51},
$$K=\E_0^\p[D_{T_K}] \geq  \sum_{j=0}^{K-1-i}\E_0^\p[D^j_{T_{j+i}}]\geq (K-i)\E_0^\p[D^0_{T_i}].$$
Dividing by $K$, letting $K\to \infty$ and then $i\to\infty$, we conclude.
\end{proof}

\blem \label{lemma7} Let $\p\in\Lambda^+$ be such that
$\P^{\p,+}(R)=0$. Then
$$\lim_{z\to\infty}\E_0^\p(D_{T_z}^+)/z=1.$$
If moreover $\p \in \Lambda^+_c$, then
$$\E_0^\p(D_\infty^0)=1.$$
\elem
\begin{proof} The proof of the first part follows the proof of Lemma 6 in Zerner \cite{Zer1}.
We write it here for completeness.
First, since $\E_0^\p(D^+_{T_z})=\P^{\p,+}(D_{T_z})$ and
$\P^{\p,+}(R)$ is a function of $(\p(x,\cdot))_{x\geq 0}$,
we can assume that $\p(x,l)=1$ for all $x\le 0$ and all $l\ge 0$. Thus Lemma \ref{lem51} can be
applied: $\E_0^\p(D_{T_z})=z$. So, it suffices to prove that
$\lim_{z\to\infty}\E_0^\p(D^-_{T_z})/z=0$.
For $i \ge 1$, let $\sigma_i=\inf\{j \ge T_i \mid X_j = 0\}$. We have, for
$z$ an integer, $$ \E_0^\p[D^-_{T_z}] = \sum_{i=0}^{z-1}
\E_0^\p[D^-_{T_{i+1}}-D^-_{T_i}].$$ Note that \beqarr
\E_0^\p[D^-_{T_{i+1}}-D^-_{T_i}] &=&
\E_0^\p\left[1_{\{\sigma_i<T_{i+1}\}}(D^-_{T_{i+1}}-D^-_{T_i})\right]\\
&=& \E_0^\p\left[1_{\{\sigma_i<T_{i+1}\}}\E_0^{\p_{\sigma_i}}(D^-_{T_{i+1}})\right]\\
&\le& (i+1)\P_0^\p(\sigma_i<T_{i+1}), \eeqarr by using again Lemma
\ref{lem51}. Thus it remains to prove that
$$\lim_{z\to\infty}\frac{1}{z}\sum_{i=1}^{z-1}i\P_0^\p(\sigma_i<T_{i+1})=0.$$
Let $Y_i = \P_0^\p[\sigma_i<T_{i+1} \mid \cF_{T_i}]$. Since
$\P_0^\p(R)=0$, the conditional Borel-Cantelli lemma implies that
$\P_0^\p$-a.s., $\sum_iY_i<+\infty$. Since $Y_i\le 1/i$ ($X$ being
greater than a Brownian motion), for all positive $\epsilon$,
$$i\times \P_0^\p(\sigma_i<T_{i+1})\le \epsilon +
\P_0^\p\left(Y_i\ge\epsilon/i\right).$$ This implies that
$$\frac{1}{z}\sum_{i=1}^{z-1}i\P_0^\p(\sigma_i<T_{i+1})
\le \epsilon +\frac{1}{z}\E_0^\p\left[\sum_{i=1}^{z-1}1_{\{Y_i\ge
\frac{\epsilon}{i}\}}\right].$$ But since $\sum_i Y_i<\infty$ a.s., the
density of the $i\le z$ such that $Y_i\ge \epsilon/i$ tends to $0$
when $z$ tends to $\infty$. Thus the preceding sum converges to 0 by dominated convergence.
This concludes the proof of the first part.

The second part is immediate (see Zerner \cite{Zer1} Theorem 12).
Since $\p \in \Lambda_c^+$, for all $K\ge 0$,
$$\E_0^\p[D_\infty^0]=\frac{1}{K} \sum_{k=0}^{K-1} \E_0^\p[D_\infty^k]\ge \frac{1}{K}\E_0^\p\left[D^+_{T_K}\right].$$
We conclude by using the first part of the lemma.
\end{proof}

The next result gives a sufficient condition for recurrence, when we only know that $\p \in \Lambda^+$. For $\p \in \Lambda^+_c$, we will obtain a necessary and sufficient condition in Theorem \ref{theo} below.
\bcor \label{cor} Let $\p\in \Lambda^+$. For $x\in \RR$, set $\delta^x(\p)=\int_{0}^\infty \p(x,u)\ du$. If
$$\liminf_{z\to +\infty} \frac{1}{z}\int_0^z \delta^x(\p)\ dx<1,$$
then $\P^{\p,+}(R)=1$. \ecor
\begin{proof} Since $\P$-a.s. $D_{T_z}^+\leq \int_0^z \delta^x(\p)\ dx$, if $\liminf \frac{1}{z}\int_0^z \delta^x(\p)\ dx<1$, then
$$\liminf \E_0^\p(D^+_{T_z})/z<1.$$
We conclude by using Lemma \ref{lemma7}.
\end{proof}

\blem \label{lem:-1} Let $\p\in\Lambda$ be such that
$\P^{\p,+}(R)=0$. Then
$$\P_0^\p(T_{-1}=+\infty)>0.$$
\elem
\begin{proof} By using Proposition \ref{excursion},
$$\P_0^\p(T_{-1}=+\infty)
= \widetilde{\P}^\p(L^{(1)}_\infty<L^{(2)}_{T_{-1}}),$$ where
$T_{-1}$ denotes also the hitting time of $-1$ for $X^2$. Since
$X^1$ and $X^2$ are independent and since $\P^{\p,+}(R)=0$ implies
that $\widetilde{\P}^\p$-a.s., $L^{(1)}_\infty < +\infty$, it suffices
to prove that for any $l>0$,
$$\P^{\p,-}(L^0_{T_{-1}}>l)>0.$$
Equivalently it suffices to prove that for any $l>0$, there exists $t>0$ such that
$$\P^{\p,-}(L^0_{T_{-1}}>l \hbox{ and } T_{-1}\le t)>0.$$
By absolute continuity of $\P^{\p,-}_{|\cF_t}$ and $\Q^{-}_{|\cF_t}$
this is equivalent to
$$\Q^-(L_{T_{-1}}>l \hbox{ and } T_{-1}\le t)>0.$$
But this is clear, since $L_s$ and $T_{-1}$ have positive densities with respect to Lebesgue measure, for any $s>0$ (see \cite{RY}). This concludes the proof of the lemma.
\end{proof}

\blem \label{lemma8} Let $\p \in \Lambda$ be such that
$\P_0^\p(R)=0$. Then for any $M>0$,
$$\P_0^\p[L_\infty^0 <M] >0.$$
\elem

\begin{proof}
Since $\P_0^\p(R)=0$, Proposition \ref{rec12} shows that $\P^{\p,+}(R)=0$ or $\P^{p,-}(R)=0$. Assume for instance that $\P^{\p,+}(R)=0$, the other case being similar. We have
\beqarr
\P_0^\p[L_\infty^0 <M]
&\geq& \P_0^\p[T_1<+\infty,\ L_{T_1}^0 <M \hbox{ and } X_t>0 \quad \forall t>T_1]\\
&\geq& \E_0^\p\left[1_{\{T_1<+\infty,\ L_{T_1}^0 <M\}} \P_1^{\p_{T_1}}(T_0=\infty)\right].
\eeqarr
But Lemma \ref{lem:-1} implies that a.s., $\P_1^{\p_{T_1}}(T_0=\infty)>0$.
So it remains to prove that $\P_0^\p(T_1<+\infty,\ L_{T_1}^0 <M)>0$.
Like in the previous lemma, by absolute continuity, it suffices to prove that
$$\Q(L_{T_1}^0 <M)>0.$$
But as in the previous lemma, this claim is clear. This concludes the proof of the lemma.
\end{proof}

\noindent Finally we obtain the

\bthm
\label{theo}
Let $\p \in \Lambda^+_c$. Then
$$\P^{\p,+}(R)=1 \quad \Longleftrightarrow \quad \int_0^\infty \p(0,u) \ du \le 1.$$
 \ethm

\begin{proof} We prove this for $\p$ such that $\p(x,l)=\p(0,l)$ for all $x$.
By Lemma \ref{lemma6}, $\E_0^\p(D_\infty^0)\le 1$. But if
$\P^{\p,+}(R)=1$, then $\P^\p_0$-a.s. $L_\infty^x=+\infty$, for
all $x$. So, by using the occupation time formula (see Lemma
\ref{ocformula}) we have
 $\E_0^\p(D_\infty^0)=\int_0^\infty \p(0,u)\ du$. This gives the
necessary condition. Reciprocally, if $\P^{\p,+}(R)=0$, we saw in
Lemma \ref{lemma7} that $\E_0^\p(D_\infty^0)=1$. But by Lemma
\ref{lemma8}, we have
$\E_0^\p(D_\infty^0)=\E_0^\p(h(0,L_\infty^0))<\int_0^\infty \p(0,u)\ du$,
which gives the sufficient condition and concludes the proof of the
theorem.
\end{proof}

Note that if $\p\in\Lambda$ is such that for some $a\in\RR$,
$\p(x,l)=\p(a,l)\ge 0$ for all $x\ge a$ and all $l\ge 0$, then
$$\P^{\p,+}(R)=1 \quad \Longleftrightarrow \quad \int_0^\infty \p(a,u) \ du \le 1.$$
This can be proved using the fact that $\P^{\p,+}(R)=\P^{\p_a,+}(R)$
which does not depend on $\p(x,\cdot)$, for $x<a$.


\brem \emph{With the technique used in this section one could prove as well
that for any $\p \in \Lambda_c$, such that $\int_0^{+\infty} |\p(0,u)|\ du <+\infty$, recurrence
implies $\int_0^\infty \p(0,u) \ du \in [-1,1]$. But as we will see, this last condition is not sufficient for recurrence. In fact the necessary and sufficient condition we will obtain in the next section requires more sophisticated tools. In particular we will use a Ray--Knight theorem.}
\erem


\section{General criterion for recurrence and law of large numbers}

\subsection{A Ray--Knight theorem}
By following the proof of the usual Ray--Knight theorem for Brownian motion given for instance in \cite{RY} Theorem (2.2) p.455, a Ray--Knight theorem can be obtained. Such a theorem is proved in \cite{NRW} (Theorem 2).
For $a\ge 0$ and $0\le x\le a$, let
$$Z^{(a)}_x:=L_{T_a}^{(1),a-x},$$
where we recall that $L^{(1),\cdot}_\cdot$ is the local time process
of $X^1$ and $T_a$ is the first time $X^1$ hits $a$. Recall also
that the law of $X^1$ is $\P^{\p,+}$ and that the function $h$ has been defined in Lemma \ref{ocformula}.
\bthm[Ray--Knight]
\label{rayknight0} The process $(Z^{(a)}_x,x\in [0,a])$ is a
non-homogeneous Markov process started at $0$ solution of the
following SDE:
\begin{eqnarray}
\label{EDSRK0}
dZ^{(a)}_x = 2 \sqrt{Z^{(a)}_x}\ d\beta_x + 2(1-h(a-x,Z^{(a)}_x))\ dx,
\end{eqnarray}
with $\beta$ a Brownian motion. \ethm
\begin{proof} We just sketch the proof. It is actually proved in [NRW2]. Applying Tanaka's formula at time $T_a$:
$$(X_{T_a}-(a-x))^+=\int_0^{T_a} 1_{\{X_s>a-x\}}dB_s + \int_0^{T_a} 1_{\{X_s>a-x\}}\p(X_s,L_s^{X_s})ds +\frac{1}{2} Z^{(a)}_x.$$
Now one has $(X_{T_a}-(a-x))^+=x$, $M_x=\int_0^{T_a} 1_{\{X_s>a-x\}}dB_s$ is a martingale (in the spatial variable $x$) and the extended occupation formula gives
$$\int_0^{T_a} 1_{\{X_s>a-x\}}\p(X_s,L_s^{X_s})ds=\int_0^x h(a-z,Z^{(a)}_z)dz.$$
Thus $Z^{(a)}_x$ is a semimartingale, whose quadratic variation is $4 \int_0^x Z^{(a)}_z dz$ (see Theorem (2.2) p.455 in [RY]). This proves the proposition.
\end{proof}

\subsection{Criterion for recurrence}
In the rest of this section, we assume that $\p \in \Lambda_c$. We will note $h(x,l)\equiv h(l)$ for all $x$ and $l$.
Note that Theorem \ref{rayknight0} (Ray--Knight) implies that for $a>0$, we have
\bprop[Ray--Knight bis]
\label{rayknight}
The process $(Z^{(a)}_x,x\in[0,a])$ is a diffusion started at $0$ with generator $\kL=2zd^2/dz^2+2(1-h(z))d/dz$. Thus $Z^{(a)}$ solves the following SDE:
\begin{eqnarray}
\label{EDSRK}
dZ^{(a)}_x = 2 \sqrt{Z^{(a)}_x}\ d\beta_x + 2(1-h(Z^{(a)}_x))\ dx,
\end{eqnarray}
with $\beta$ a Brownian motion.
\eprop

In the following, $Z$ will denote a diffusion started at $0$ with generator $\kL$. Then
$(Z^{(a)}_x,0\le x\le a)$ and $(Z_x,0\le x\le a)$ have the same law.
As noticed in \cite{NRW} Theorem 3, the diffusion $Z$ admits an invariant measure $\pi$ given by
\begin{equation}\label{cstec}\pi(x)=c\cdot \exp \left[-\int_0^x h(l)\ \frac{dl}{l}\right],\end{equation}
with $c$ some positive constant. When $\pi$ is a finite measure, $c$ is chosen such that $\pi$ is a probability measure.
This implies the following generalization of Theorem \ref{theo}~:

\bthm
\label{theorec}
Let $\p \in \Lambda_c$. Then
$$\P^{\p,+}(R)=1 \quad \Longleftrightarrow \quad \int_0^\infty \exp \left[-\int_0^x h(l)\ \frac{dl}{l}\right] \ dx =+\infty.$$
\ethm

\begin{proof}
Assume that $\P^{\p,+}(R)=1$. Then $Z^{(a)}_a=L^{(1),0}_{T_a}$ converges a.s. towards $\infty$ as $a\to\infty$. Thus $Z_a$ (which is equal in law to $Z^{(a)}_a$) converges in probability towards $\infty$. Thus $Z$ is not positive recurrent, i.e. $\pi$ is not a finite measure.

Assume now that $\P^{\p,+}(R)=0$. Then $Z^{(a)}_a=L^{(1),0}_{T_a}$ converges a.s. towards $L^{(1),0}_\infty<\infty$ as $a\to\infty$. Thus $Z_a$  converges in law towards $L^{(1),0}_\infty$. The law of $L^{(1),0}_\infty$ is then an invariant probability measure. Since $Z$ is irreducible it admits at most one invariant probability measure. Thus $\pi$ is a probability measure. 
\end{proof}

Of course an analogue result holds as well for $X^2$. The criterion becomes:
$$\P^{\p,-}(R)=1 \quad \Longleftrightarrow \quad \int_0^\infty \exp \left[\int_0^x h(l)\ \frac{dl}{l}\right] \ dx =+\infty.$$
In particular, $X^1$ and $X^2$ cannot be both transient.

Observe now that if $\lim_{l\to +\infty} h(l)$ exists, and equals let say $h_\infty$, then
$$\P^\p_0(R)=1 \quad \Longrightarrow \quad h_\infty \in [-1,1],$$
and
$$\P^\p_0(R)=0 \quad \Longrightarrow \quad h_\infty \notin (-1,1).$$
But in the critical case $|h_\infty|=1$ both recurrence and transience regimes may hold. For instance
if $h(l)-1 \sim \alpha/\ln l$ in $+\infty$, then $\P^\p_0(R)=1$ if $\alpha < 1$, whereas $\P^\p_0(R)=0$ if $\alpha>1$.

\medskip
Let us finish this subsection with this last remark:
Let $\zeta_x:=L^{(1),x}_\infty$, with $x\ge 0$. Assume $Z$ is positive recurrent (then $X^1$ is transient). Using the fact that $Z$ is reversible, it can be seen that $\zeta$ is a diffusion with generator $\kL$ and initial distribution $\pi$ (see Theorem 3 in \cite{NRW}).

\subsection{Law of large numbers}
Our next result is a strong law of large numbers with an explicit expression for the speed.
\bthm
\label{LLN}
Let $\p \in \Lambda_c$.
\begin{itemize}
\item[(i)] Assume $\pi$ is a probability measure and denote by $v$ the mean of $\pi$
(i.e. $v=\int_0^\infty x\pi(x)dx\in [0,\infty]$). Then
$\tilde{\P}^\p$--a.s.
$$\lim_{t\to\infty}\frac{X^1_t}{t} = \frac{1}{v}.$$
\item[(ii)] Assume $\pi$ is an infinite measure. Then, $\tilde{\P}^\p$--a.s.
$$\lim_{t\to\infty}\frac{X^1_t}{t} = 0.$$
\end{itemize}
\ethm
\begin{proof} We take $v=\infty$ when $\pi$ is an infinite measure.
We first prove that (in all cases) $\tilde{\P}^\p$--a.s.
\begin{eqnarray}
\label{limiteTK}
\frac{T_K}{K}\to v,
\end{eqnarray}
when $K\to +\infty$.
Fix $N\ge 1$. Since $\p \in \Lambda_c$,
\begin{eqnarray}
\label{minoration}
\frac{1}{K}T_K \ge \frac{1}{K} \sum_{i=1}^{[K/N]-1} (T_{N(i+1)}-T_{Ni}) \ge \frac{1}{K} \sum_{i=1}^{[K/N]-1} U_i,
\end{eqnarray}
where the $U_i$ are i.i.d. random variables distributed like $T_N$
($U_i$ is the time spent in $[Ni,N(i+1)]$ between times $T_{Ni}$ and
$T_{N(i+1)}$). Now (in the following $\E$ denotes the expectation
with respect to $\tilde{\P}^\p$)
$$\E[Z_a]=\E[Z^{(a)}_a]=\E[L^{(1),0}_{T_a}]$$
is an increasing function of $a$. By monotone convergence $\E[Z_a]$ converges towards $\E[Z_\infty]\equiv v$.
It is standard to see by using \eqref{EDSRK} that $\E[Z_a]<\infty$ for all $a$ ($\p$ being bounded, $2(1-h(z))$ is dominated by $c(1+z)$ for some positive $c$).
Thus for all $N\ge 1$,
$$\E[T_N]=\int_0^N \E[L^{(1),a}_{T_N}]\ da= \int_0^N \E[Z_a]\ da\le N\E[Z_N] <+\infty.$$
Therefore \eqref{minoration} shows that
\begin{eqnarray*}
\liminf_{K\to\infty} \frac{1}{K}T_K &\ge& \frac{1}{N}\E[T_N]\\
&\ge& \frac{1}{N}\int_0^N \E[Z_a]\ da.
\end{eqnarray*}
By taking the limit as $N\to\infty$, we get
\begin{eqnarray}
\label{liminfTK}
\liminf_{K\to\infty} \frac{1}{K}T_K \ge v.
\end{eqnarray}
So we can conclude when $v=\infty$. Assume now that $v<\infty$. Then $\zeta_a=L^{(1),a}_{\infty}$ is a diffusion with generator $\kL$ and initial
distribution $\pi$.
Now
$$T_K= \int_0^K L^{(1),a}_{T_K}\ da \le \int_0^K \zeta_a\ da,$$
so that
\begin{eqnarray*}
\limsup_{K\to \infty} \frac{1}{K} T_K \le v,
\end{eqnarray*}
by the well known ergodic theorem for positive recurrent diffusions (see, for instance, \cite{IK} \S 6.8).
Together with \eqref{liminfTK} this proves \eqref{limiteTK}.

\medskip
Note that for all $t\in (T_N,T_{N+1})$.
\begin{eqnarray}
\label{inegalite1}
\frac{X^1_t}{t} \le \frac{N+1}{T_N}.
\end{eqnarray}
The law of large numbers follows immediately when $v=\infty$.
Assume now that $v<+\infty$.
Since $\p$ is bounded, for all $N$, $(X^1_t, t\ge T_N)$ dominates a Brownian motion with drift $-\|\p\|_\infty$ started at $N$ and absorbed at $0$. So
for all $\epsilon>0$, there exist constants $c>0$ and $C>0$ such that
$$\P^{\p,+}\left[\inf_{t\in (T_N,T_{N}+cN)} X_t < (1-\epsilon)N\right]  \le C \exp (-cN),$$
for all $N\ge 1$. So $\tilde{\P}^\p$-a.s. we have
$$ \inf_{t\in (T_N,T_{N}+cN)} X^1_t\ge (1-\epsilon)N,$$
for all $N$ large enough. Moreover since a.s. $\lim_{N\to\infty} T_N/N = v$, we have $T_{N+1}-T_N=o(N)$. So
a.s. for $N$ large enough,
$$ \inf_{t\in (T_N,T_{N+1})} X^1_t\ge (1-\epsilon)N.$$
Then a.s. for $N$ large enough and $t\in (T_N,T_{N+1})$,
$$\frac{X_t^1}{t} \ge (1-\epsilon)\frac{N}{T_{N+1}}.$$
By \eqref{limiteTK}, a.s. $T_N \sim v N$. So this
together with \eqref{inegalite1} finishes the proof of the theorem.
\end{proof}

Here also an analogous result holds for $X^2$. One has
$\tilde{\P}^\p$-a.s.
$$\frac{X^2_t}{t}\to -\frac{1}{v'},$$
where
$$v'= \int_0^\infty x\exp \left[\int_0^x h(l)\ \frac{dl}{l}\right] \ dx.$$
Now if $\p \in \Lambda_c$, as we already observed, $X^1$ and $X^2$ cannot be both transient. But assume for instance that, let say, $X^1$ is transient and $X^2$ is recurrent.
Then for $t$ large enough $X^1_t=X_{t+U}$, where $U$ is the finite random variable equal to the total time spent by $X$ in the negative part. So $X$ satisfies the same law of large numbers as $X^1$, namely a.s.
$$\frac{X_t}{t} \to \frac{1}{v}.$$

\brem \emph{Let us comment now on the case when $\p$ is only
supposed to be locally bounded. Assume that $\p$ is constant in the
first variable and that $\liminf_{l\to\infty} h(l)/l > -\infty$ (this includes the case $\inf_{l\ge 0}\p(0,l)>-\infty$). Then, as we
will see in the appendix below, the process $X^1$ of law $\P^{\p,+}$
is still well defined. Moreover, as the reader may check,
Proposition \ref{01law} and Theorem \ref{theorec}
hold as well for $X^1$. Concerning Theorem \ref{LLN}, it is still true that $T_K/K\to v$. Then the law of large numbers follows if $v=\infty$ or under the additional hypothesis that $\p$ is bounded below. Using the same method, like in \cite{NRW},
it is also possible to construct $\P^\p$, assuming that  $\liminf_{l\to\infty} h(l) > 0$.
Then the study of $\P^\p$ can be reduced to the study of
$\P^{\p,+}$. But this method does not look to apply to construct
$\P^{\p,-}$. We observe also that the following basic question remains:} \erem

\noindent \textbf{Question:} Is it possible to find $\p$ for which $\P^{\p,+}$ can be defined and such that $\P^{\p,+}(\sup_{t\ge 0} X_t<+\infty)>0$?

\section{Central limit theorem}\label{tlc}
In this section, $\p\in\Lambda_c$ and we assume that $\pi$ is a probability measure with finite mean $1/v$.
Define $s$ the scale function of the diffusion with generator $\kL$, by $s(1)=0$ and 
$$s'(x)=x^{-1}\exp\left(\int_0^x \frac{h(l)}{l}\ dl\right).$$
Note that $\lim_{x\to 0} s(x) = -\infty$ and $\lim_{x\to\infty} s(x)=+\infty$.
Let 
$$\sigma := 4\sqrt{c} v^{3/2} \left(\left(\int_{[0,1]^2}+\int_{[1,\infty)^2}\right) \frac{|s(x)|\wedge |s(y)|}{s'(x)s'(y)}\ dx \ dy\right)^{1/2},$$
with $c$ defined in \eqref{cstec}.

The following theorem is similar to Theorem D.5 in \cite{HY}:
\begin{theorem}
\label{theotcl}
Assume that $\sigma<+\infty$. Let $X$ be a process with law $\P^{\p,+}$. Then, as $t\to \infty$, 
$$\frac{X_t-vt}{\sigma\sqrt{t}}$$
converges in law toward a centered and reduced Gaussian variable. This convergence also holds for $\inf_{s\geq t}X_s$ and for $\sup_{s\le t}X_s$ instead of $X_t$.
\end{theorem}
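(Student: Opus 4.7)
The strategy is to establish a CLT for the hitting times $T_K := \inf\{t\ge 0 : X_t = K\}$ via the Ray--Knight representation, then invert to obtain a CLT for the running maximum $M_t := \sup_{s\le t} X_s$, and finally compare $M_t$ with $X_t$ and $\inf_{s\ge t} X_s$. By the occupation times formula and Proposition \ref{rayknight},
\[
T_K = \int_0^K L^{(1),a}_{T_K}\, da \stackrel{d}{=} \int_0^K Z_u\, du,
\]
where $Z$ is the diffusion started at $0$ with generator $\kL$. Since $Z$ is positive recurrent with invariant probability $\pi$ of mean $1/v$, the ergodic theorem recovers $T_K/K \to 1/v$ and suggests fluctuations of order $\sqrt{K}$.

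To obtain the CLT for $T_K$, solve the Poisson equation $\kL g = f$ for $f(z) = z - 1/v$ (which has $\pi$-mean zero). Writing $\kL$ in Sturm--Liouville form $\kL g = (1/m)(g'/s')'$ with $m$ the speed measure density, one finds
\[
g'(x) = s'(x) \int_0^x (y - 1/v)\, m(y)\, dy.
\]
It\^o's formula applied to $g(Z_t)$ then gives
\[
T_K - K/v \stackrel{d}{=} g(Z_K) - g(0) - N_K,
\]
where $N_K := \int_0^K 2 g'(Z_u)\sqrt{Z_u}\, d\beta_u$ is a square-integrable martingale. Since $Z_K$ converges in law to $\pi$, the boundary term $g(Z_K) - g(0)$ is tight, hence $o(\sqrt{K})$ in probability. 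The quadratic variation $\langle N\rangle_K/K$ converges by the ergodic theorem to $\sigma_0^2 := 4\int_0^\infty z\,(g'(z))^2 \pi(z)\, dz$, and the martingale CLT yields $(T_K - K/v)/\sqrt{K} \Rightarrow \mathcal{N}(0, \sigma_0^2)$.

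Inverting this using the sandwich $T_{M_t} \le t < T_{M_t + \varepsilon}$ and the LLN gives $(M_t - vt)/\sqrt{t} \Rightarrow \mathcal{N}(0, v^3 \sigma_0^2)$. A direct computation using the identity $z\, s'(z)\, \pi(z) = c$, followed by Fubini and integration by parts with the integration range split at $z=1$, shows that $v^3 \sigma_0^2$ coincides with the stated $\sigma^2$; in particular, the hypothesis $\sigma < +\infty$ is exactly the integrability needed to justify the martingale CLT.

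Finally, to transfer the CLT from $M_t$ to $X_t$ and to $\inf_{s \ge t} X_s$, one uses that $X$ drifts to $+\infty$ with positive speed $v$ and that $\p$ is bounded: comparing the excursions of $X$ below its running maximum with those of a Brownian motion with drift $-\|\p\|_\infty$ (as in the end of the proof of Theorem \ref{LLN}), one shows $M_t - \inf_{s \ge t} X_s = O(\log t) = o(\sqrt{t})$ almost surely, which suffices to transfer the Gaussian limit. The main obstacle is the variance identification in the third step: converting $\sigma_0^2$ into the explicit kernel form involving $|s(x)| \wedge |s(y)|/(s'(x) s'(y))$ is a somewhat delicate bookkeeping exercise, and it is at this point that the role of the hypothesis $\sigma < +\infty$ becomes transparent.
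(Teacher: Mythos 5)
Your skeleton is the same as the paper's: a CLT for the hitting times $T_K=\int_0^K Z_u\,du$ via the Ray--Knight diffusion $Z$, inversion to the running supremum, then a transfer to $X_t$ and $\inf_{s\ge t}X_s$. Where you genuinely differ is the core step: you prove the hitting-time CLT by the corrector method (solve $\kL g=z-1/v$, It\^o, martingale CLT), whereas the paper writes $Z_t=s^{-1}(\beta(a_t^{-1}))$ and delegates the CLT for $\int_0^r Z_t\,dt$ to Theorem D.5 of \cite{HY}, only checking that the constants $C_6,C_9,C_{10}$ there are positive or finite. Your route is self-contained and correct in outline: with $g'(x)=s'(x)\int_0^x(y-1/v)m(y)\,dy$ one gets $T_K-K/v=g(Z_K)-g(0)-N_K$, $\langle N\rangle_K/K\to\sigma_0^2=4\int_0^\infty z\,(g'(z))^2\pi(z)\,dz$, and inversion gives variance $v^3\sigma_0^2$ for $\sup_{s\le t}X_s$.

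The genuine gap is precisely the step you wave off as bookkeeping: the identification $v^3\sigma_0^2=\sigma^2$ with the displayed kernel $\frac{|s(x)|\wedge|s(y)|}{s'(x)s'(y)}$. Unwinding $\sigma_0^2$ (using $zs'(z)\pi(z)=c$ and $|a|\wedge|b|\,1_{\{ab\ge0\}}=\tfrac12(|a|+|b|-|a-b|)$) yields a double integral over $[0,1]^2\cup[1,\infty)^2$ with the same kernel $|s(x)|\wedge|s(y)|$ but weighted by the \emph{compensated} density $(x-1/v)(y-1/v)\pi(x)\pi(y)$, not by a constant multiple of $1/(s'(x)s'(y))$; in the Brownian scale the relevant function is $\Psi-\Delta/v$ (with $\Psi(x)=1/(4s'(s^{-1}(x))^2)$), and this compensation is exactly what Hu--Yor's $C_{10}$ encodes. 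A test case shows the mismatch is real and not removable by Fubini gymnastics: for $\p\equiv\gamma$ constant, $\P^{\p,+}$ is a reflected Brownian motion with drift $\gamma$, and one computes $c=v=\gamma$, $F(w)=\int_0^w(y-1/v)\pi(y)\,dy=-we^{-\gamma w}$, hence $\sigma_0^2=\frac1c\int_0^\infty s'F^2=\gamma^{-3}$ and $v^3\sigma_0^2=1$, the correct normalizer for a unit-variance drifted Brownian motion, whereas the displayed double integral (with $s'(x)=x^{-1}e^{\gamma x}$) is of a completely different size. So the equality you assert cannot be ``shown by a direct computation'' as stated; you must carry the compensated weight through the variance identification (or, as the paper does, hand this step to \cite{HY}). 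A secondary, smaller gap: your transfer step claims $\sup_{s\le t}X_s-\inf_{s\ge t}X_s=O(\log t)$ a.s.\ by comparison with a Brownian motion with drift $-\|\p\|_\infty$; such a comparison process dips arbitrarily far below any level, so it cannot by itself bound how deep $X$ falls below its running maximum. Only an in-probability $o(\sqrt t)$ bound is needed, and it follows from the identity used in the paper, $\P^{\p,+}\left[\inf_{s>T_r}X_s>r-K\right]=\P^{\p,+}\left[\inf_{s>T_K}X_s>0\right]\to1$ as $K\to\infty$, uniformly in $r>K$.
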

\begin{proof}
Denote by $T_r$ the hitting time of $r$ by $X$.
Then, for all $0<K<r$, $$\P^{\p,+}\left[\inf_{s>T_r}X_s>r-K\right]=\P^{\p,+}\left[\inf_{s>T_K}X_s>0\right],$$ 
and this quantity converges to $1$ when $K\to\infty$. Thus it suffices to show the central limit theorem for $\sup_{s\le t} X_s$ in lieu of $X_t$. 
Then, up to notation (with in particular $-X$ in place of $X$), we can follow the proof of Theorem D.5 in \cite{HY}. 
Indeed we have $T_r=\int_0^rZ_t\ dt$, and it suffices to show that 
\begin{eqnarray}
\label{tclTr}
\frac{T_r-r/v}{\sqrt{r}}\longrightarrow \sigma v^{-3/2}\cN,
\end{eqnarray}
with $\cN$ a centered and reduced Gaussian variable.

Now there exists some Brownian motion $\beta$ such that $Z_t=s^{-1}(\beta(a_t^{-1}))$, where $a_t=\int_0^t\Delta(\beta_v)\ dv$ and $\Delta(x)=1/\left(4s^{-1}(x)s'^2(s^{-1}(x))\right)$. Now $\Delta$ is integrable on $\RR$ since $\pi$ is a finite measure:
$$\int_{-\infty}^\infty \Delta(x)\ dx=\int_0^\infty \frac{du}{4us'(u)}=(4c)^{-1}.$$
With notation of \cite{HY} this means that $C_9=4c>0$. In the same way, $v>0$ implies that $C_6$ from \cite{HY}, which is equal to $v$, is positive and $C_{10}=\sigma v^{-3/2}$ is finite if $\sigma<+\infty$. Then (see \cite{HY} for details) we get \eqref{tclTr}.
The theorem follows. 
\end{proof}

As observed for the law of large numbers, Theorem \ref{theotcl}
implies also a central limit theorem for an excited Brownian
motion of law $\P^\p$, when the process of law $\P^{\p,+}$ (or
$\P^{\p,-}$) has nonzero speed and satisfies the condition of Theorem \ref{theotcl}.

\begin{proposition}
\label{proptcl}
If $h(l)$ converges towards $h_\infty$, then the constant $\sigma$ is finite if  $h_\infty>4$ and infinite if $0<h_\infty<4$.
\end{proposition}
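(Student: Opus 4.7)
The plan is to split $\sigma^2/(16cv^3)$ as $I_0+I_\infty$, where
$$I_0 := \int_{[0,1]^2} \frac{|s(x)|\wedge|s(y)|}{s'(x)s'(y)}\,dx\,dy, \qquad I_\infty := \int_{[1,\infty)^2} \frac{|s(x)|\wedge|s(y)|}{s'(x)s'(y)}\,dx\,dy,$$
and to show that $I_0$ is always finite while $I_\infty$ is finite if and only if $h_\infty>4$.

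First I would establish asymptotics for $s'$ and $s$ at infinity. Writing $h(l)=h_\infty+\epsilon(l)$ with $\epsilon(l)\to 0$, and noting that $h(l)=O(l)$ near $0$ (since $\p$ is bounded), the constant $C:=\int_0^1 h(l)/l\,dl$ is finite, and for $x\ge 1$,
$$s'(x) = e^C\, x^{h_\infty-1} L(x), \qquad L(x) := \exp\!\left(\int_1^x \frac{\epsilon(l)}{l}\,dl\right).$$
A standard $\epsilon$-argument shows that for every $\delta>0$, both $L(x)$ and $1/L(x)$ are $O(x^\delta)$ as $x\to\infty$. Integrating in $x$ (using $h_\infty>0$) yields two-sided bounds of the form $c'_\pm(\delta)\, x^{h_\infty\pm\delta}$ for $s(x)$, and hence $s(x)/s'(x) = x\cdot O(x^{\pm 2\delta})$.

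Next I would evaluate $I_\infty$. Since $s$ is nonnegative and increasing on $[1,\infty)$, for $1\le x\le y$ one has $|s(x)|\wedge|s(y)|=s(x)$, so by symmetry
$$I_\infty = 2\int_1^\infty \frac{1}{s'(y)} \int_1^y \frac{s(x)}{s'(x)}\,dx\,dy.$$
By the bounds from Step 1, the inner integral is of order $y^{2\pm O(\delta)}$, and the outer integrand is of order $y^{3-h_\infty\pm O(\delta)}$. Taking $\delta$ small enough, this is integrable at infinity if and only if $h_\infty>4$, yielding both directions of the dichotomy.

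Finally I would handle $I_0$, which is always finite. Near $0$, $h(l)/l$ is bounded (since $h(l)=O(l)$), so $s'(x)\asymp 1/x$ and $s(x)\asymp \ln x$, whence $|s|$ is decreasing on $(0,1)$. By symmetry,
$$I_0 = 2\int_0^1 \frac{|s(y)|}{s'(y)} \int_0^y \frac{dx}{s'(x)}\,dy,$$
and the integrand is $O(y^3\log(1/y))$ as $y\to 0^+$ and bounded as $y\to 1^-$, hence integrable. The main obstacle is the bookkeeping with the slowly varying factor $L$ in Step 1; once the power-type bounds with arbitrarily small $\delta$ are in place, Step 2 delivers the dichotomy cleanly in both directions.
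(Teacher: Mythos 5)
Your proposal is correct and follows essentially the same route as the paper: split $\sigma$ into the contributions from $[0,1]^2$ and $[1,\infty)^2$, use the power-law bounds $s'(x)=x^{h_\infty-1+o(1)}$, $s(x)=x^{h_\infty+o(1)}$ at infinity to get the dichotomy at $h_\infty=4$ for the far part, and use $s'(x)\sim 1/x$ near $0$ to see the near part is always finite. The only differences are cosmetic (your symmetric rewriting with the factor $2$ versus the paper's split into $x<y$ and $x>y$, and your more explicit justification of the $\pm\delta$ bounds via the slowly varying factor $L$).
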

\begin{proof}
First
$$ I_1=\int_{[1,\infty)^2} \frac{|s(x)|\wedge |s(y)|}{s'(x)s'(y)}\ dx\ dy$$ is equal to
$$\int_1^\infty \left(\int_1^y \frac{s(x)}{s'(x)}\ dx + \int_y^\infty \frac{s(y)}{s'(x)} dx\right)\frac{dy}{s'(y)}.$$
But note that for any $\epsilon>0$, $x^{h_\infty-1-\epsilon}\le s'(x)\le x^{h_\infty-1+\epsilon}$ and $x^{h_\infty-\epsilon}\le s(x)\le x^{h_\infty+\epsilon}$ for $x$ large enough.
Thus for any $\epsilon>0$,
$$\frac{1}{s'(y)} \int_1^y \frac{s(x)}{s'(x)}\ dx,$$
and 
$$\frac{1}{s'(y)} \int_y^\infty \frac{s(y)}{s'(x)}\ dx,$$
both lie in the interval $[y^{3-h_\infty-\epsilon},y^{3-h_\infty+\epsilon}]$ for $y$ large enough. This implies that $I_1<\infty$ if $h_\infty>4$ and $I_1=\infty$ if $0<h_\infty<4$.
The term 
$$I_2 = \int_{[0,1)^2} \frac{|s(x)|\wedge |s(y)|}{s'(x)s'(y)}\ dx \ dy$$
is always finite, since as $x\to 0$, $s'(x)\sim 1/x$. \end{proof}

\section{Appendix: Construction of $\P^\p$ and $\P^{\p,+}$ in cases $\p$ is locally bounded}
We assume that $\p$ is constant in the first variable. We note $\p(x,l) \equiv \p(l)$ for any $x$ and $l$. We assume also that $\p$ is locally bounded, i.e. $\sup_{l\in [0,L]} \p(l)<+\infty$ for any $L>0$, and $\liminf_{l\to\infty} h(l)/l > -\infty$.
We want to define $\P^{\p,+}$. Following \cite{NRW}, set $\p_n(l):=1_{\{l\le n\}}\p(l)$.
Let $S'_n=\inf\{t\mid L^{X_t}_t>n\}$ and let $S_n=S'_n\wedge n$. According to Theorem $B$ in \cite{NRW}, if we can prove that for each $t>0$,
$$\P^{\p_n,+}(S_n<t)\to 0 \quad \textrm{as } n\to +\infty,$$
then by consistency, it will be possible to define $\P^{\p,+}$.
Observe that it is enough to prove the two statements:
$$\textrm{for each }a>0 \quad P^{\p_n,+}(S_n<T_a)\to 0 \quad \textrm{as } n\to +\infty,$$
and for each $t>0$ and $\epsilon>0$, there is some $a>0$ such that
$$\P^{\p_n,+}(T_a<t)<\epsilon \quad \textrm{for all }n.$$
The first statement can be proved like in \cite{NRW}: by using the Ray-Knight theorem under $\P^{\p_n,+}$,
$$\P^{\p_n,+}(S_n<T_a)=\P^{\p_n,+}\left(\sup_{0\le x\le a} Z_x^{(a)}>n\right)=\P^Z\left(\sup_{0\le x\le a}Z_x>n\right),$$
where $\P^Z$ denotes the law of the diffusion with generator $\kL=2zd^2/dz^2+2(1-h(z))d/dz$.
The last term tends to $0$ when $n\to +\infty$, since a.s. $Z$ does not explode in finite time (this follows from the fact that $\liminf_{l\to\infty} h(l)/l > -\infty$).
For the second statement, write:
\begin{eqnarray*}
\P^{\p_n,+}(T_a<t)&=&\P^{\p_n,+}\left(\int_0^a Z_x^{(a)}\ dx<t\right)\\
               &\le & \P^{\p_n,+}\left(\int_0^{a\wedge T_n} Z_x^{(a)}\ dx<t\right)\\
                &\le & \P^Z\left(\int_0^{a\wedge T_n} Z_x\ dx<t\right),
\end{eqnarray*}
where $T_n$ denotes the first time $Z^{(a)}$ or $Z$ reaches $n$. But
by using standard scale functions arguments, we can see $0$ is an
entrance boundary point for the diffusion with generator $\kL$. Thus
$\P^Z$-a.s.
$$\int_0^\infty Z_x\ dx=+\infty.$$
The second statement follows.

\medskip
We give now some hints
to construct $\P^\p$ when $\liminf h(l)>0$.
Using the fact that $\P^{\p_n,-}(R)=1$, all the stopping times $T_a$ are finite $\P^{\p_n}$-a.s. for all positive $a$. Then the method of \cite{NRW}
can be followed.

\end{document}